\documentclass[11pt,fullpage]{article}
\usepackage{multicol,wrapfig,amsmath,subfigure}
\usepackage{epstopdf}
\usepackage{multirow}
\usepackage{amsmath}
\usepackage{amsfonts,amssymb}
\usepackage{amsthm}
\usepackage{graphics,graphicx}

\usepackage[hyphens,spaces,obeyspaces]{url}

\usepackage{hyperref}
\usepackage[right = 2.5cm, left=2.5cm, top = 2.5cm, bottom =2.5cm]{geometry}
\usepackage{tikz-cd}
\usepackage{slashed}
\usepackage{enumerate}
\usepackage{comment}
\usepackage{authblk}
\usepackage{mathtools}
\pagestyle{plain}
\usepackage{relsize}
\usepackage[bbgreekl]{mathbbol}
\usepackage{amsfonts}
\DeclareSymbolFontAlphabet{\mathbb}{AMSb} 
\DeclareSymbolFontAlphabet{\mathbbl}{bbold}
\newcommand{\Prism}{{\mathlarger{\mathbbl{\Delta}}}}

\setlength\parindent{0pt}

\newtheorem{prop}{Proposition}[section]
\newtheorem{lemma}{Lemma}[section]
\newtheorem{defn}{Definition}[section]
\newtheorem{thm}{Theorem}[section]
\newtheorem{cor}{Corollary}[section]

\newtheorem{example}{Example}[section]

\newcommand{\C}{\mathbb{C}}
\newcommand{\Z}{\mathbb{Z}}
\newcommand{\Q}{\mathbb{Q}}

\newcommand{\etale}{\'etale }
\newcommand{\Etale}{\'Etale }
\newcommand{\Einf}{$E_{\infty}$}
\newcommand{\Fp}{$\mathbb{F}_p$}
\newcommand{\Fpbar}{$\overline{\mathbb{F}_p}$}
\newcommand{\Ainf}{$A_{\textnormal{inf}}$ }

\newcommand{\FpCochains}{$C^{*}_{\textnormal{sing}}(X(\C), \mathbb{F}_p)$ }
\newcommand{\FpEtaleCochains}{\textnormal{R}$\Gamma_{\textnormal{\'et}}(X, \mathbb{F}_p)$ }
\newcommand{\FpbarFunctor}{$C^{*}_{\textnormal{sing}}(-, \overline{\mathbb{F}_p} )$ }

\newcommand{\RG}{\textnormal{R}$\Gamma$}

\graphicspath{ {images/} }

\begin{document}

\title{Prismatic cohomology and $p$-adic homotopy theory}
\author{Tobias Shin}

\begin{center}
\Large\textbf{Prismatic cohomology and $p$-adic homotopy theory}
\linebreak
\text{\normalsize{Tobias Shin}}\\
\end{center}
\begin{abstract}
This short note regards an observation about the recent theory of prismatic cohomology developed by Bhatt and Scholze. In particular, by applying a functor of Mandell, we see that the \etale comparison theorem in the prismatic theory reproduces the $p$-adic homotopy type for a smooth proper complex variety with good reduction mod $p$.
\end{abstract}
\section{Introduction}
The recent theory of prismatic cohomology developed by Bhatt and Scholze \cite{bhatt} unifies well known $p$-adic cohomology theories, such as de Rham, \'etale, and crystalline, into one general framework through various comparison theorems. As an application, the work of Bhatt, Morrow, and Scholze in \cite{BMS} yields the following corollary of the \etale comparison theorem in \cite{bhattEilenberg} (Lecture IX, Theorem 5.1).
\begin{thm}\label{BMSetale} (Bhatt-Morrow-Scholze) Let $\C_p$ denote the $p$-completion of an algebraic closure of $\Q_p$ and let $\mathcal{O}_{\C_p}$ denote its ring of integers. Let $\frak{X}$ denote a smooth, proper formal scheme over $\mathcal{O}_{\C_p}$ and let $X$ denote its fiber over $\C_p$. Then there is a quasi-isomorphism 
\begin{equation*}
(\textnormal{R}\Gamma(\frak{X},\Prism_{\frak{X}/A_{\textnormal{inf}}})\otimes^{L}_{A_{\textnormal{inf}}} \mathbb{C}^{\flat}_p)^{\phi=1} \simeq \textnormal{R}\Gamma_{\textnormal{\'et}}(X,\mathbb{F}_p)
\end{equation*} 
where the notation $A_{\textnormal{inf}} := W(\mathcal{O}_{\C_p}^\flat)$ denotes the Witt vectors of the tilt of $\mathcal{O}_{\C_p}$, and $(-)^{\phi=1}$ denotes the homotopy fixed points of the lift of Frobenius $\phi$ on the prism $($\Ainf$,\textnormal{ker}($\Ainf $ \xrightarrow{\theta}\mathcal{O}_{\C_p}))$.
\\ \\
In particular, identify $\C_p$ with $\C$ and let $X \subset \mathbb{P}^n$ be a smooth projective variety over $\C$ that arises as the generic fiber of a smooth, proper scheme $X_{\mathbb{Z}}$ over $\Z$, e.g., one whose defining homogeneous polynomials have coefficients in $\Z$ and whose Jacobian matrix has rank $n + 1 - \textnormal{dim}(X)$ modulo $p$. Then the above quasi-isomorphism holds after taking $\frak{X}$ to be the fiber of $X_{\mathbb{Z}}$ over \Ainf.
\end{thm}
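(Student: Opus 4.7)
Since the main quasi-isomorphism is cited from Bhatt-Morrow-Scholze, the task is to verify that the geometric data in the second paragraph produces a formal scheme $\mathfrak{X}$ satisfying the hypotheses of the first part of the theorem, so that the general comparison result applies. The strategy is to exhibit $\mathfrak{X}$ as a $p$-adic completion of a base change of $X_\Z$, check smoothness and properness, and then identify its generic fiber with $X$ under an abstract identification $\C_p \cong \C$.

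Concretely, I would take $\mathfrak{X}$ to be the $p$-adic completion of the base change $X_\Z \times_\Z \mathcal{O}_{\C_p}$ along the structure map $\Z \to \mathcal{O}_{\C_p}$ (factoring through $\theta : A_{\textnormal{inf}} \to \mathcal{O}_{\C_p}$ when one wishes to emphasize the prismatic role of $A_{\textnormal{inf}}$). Since $X_\Z$ is smooth and proper over $\Z$ by hypothesis, with smoothness verified in the projective case by the given rank condition on the Jacobian matrix modulo $p$, both properties are preserved under arbitrary base change and under $p$-adic completion of a proper scheme. Thus $\mathfrak{X}$ is indeed a smooth proper formal scheme over $\mathcal{O}_{\C_p}$, as required.

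Next I would identify the generic fiber of $\mathfrak{X}$ over $\C_p$ with $X$. By construction this fiber is $X_\Z \times_\Z \C_p$; upon fixing an abstract field isomorphism $\C_p \cong \C$ (which exists by the axiom of choice, since both are algebraically closed fields of characteristic zero and of continuum cardinality), this base-changes to $X_\Z \times_\Z \C = X$. Applying the Bhatt-Morrow-Scholze result to $\mathfrak{X}$ then delivers the desired quasi-isomorphism, now with the right-hand side computed as $\textnormal{R}\Gamma_{\textnormal{\'et}}(X,\mathbb{F}_p)$ for $X$ viewed over $\C$.

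The main subtlety, and the only genuinely non-formal step, is the transport of \'etale cohomology across the non-canonical isomorphism $\C_p \cong \C$. To make sense of the right-hand side regardless of whether $X$ is regarded as a scheme over $\C$ or over $\C_p$, one invokes the standard invariance of $\mathbb{F}_p$-\'etale cohomology of a proper scheme under extension of algebraically closed base fields of characteristic zero, a consequence of proper base change. Once this invariance is secured, the reduction to the first part of the theorem is essentially tautological, and the claim follows.
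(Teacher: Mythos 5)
Your proposal is correct and matches the paper's treatment: the paper likewise defers the quasi-isomorphism itself to Bhatt--Morrow--Scholze (sketching only the affine local building block, Theorem \ref{BMSbigetale}, via arc-descent to perfectoids and the tilting equivalence), and the ``in particular'' clause is exactly the specialization you describe --- base change $X_\Z \times_\Z \mathcal{O}_{\C_p}$, $p$-adic completion, and transport along an abstract isomorphism $\C_p \cong \C$ using invariance of $\mathbb{F}_p$-\'etale cohomology under extension of algebraically closed fields of characteristic zero. The one point deserving slightly more care is that the generic fiber of the formal scheme $\mathfrak{X}$ is an adic (rigid-analytic) space rather than the scheme $X_\Z \times_\Z \C_p$ itself, so identifying its \'etale cohomology with that of the algebraic fiber requires the proper GAGA-type comparison of Huber \cite{huber}, which the paper itself invokes in its sketch of the affine case; once that is in place your reduction is complete.
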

In fact, the proof of the above theorem proves more: there are natural \Einf-\Fp-\textit{algebra} structures on the cochains above, such that the quasi-isomorphism is a map of \Einf-\Fp-algebras. The natural \Einf-\Fp-algebra structures in discussion arise via Godement resolutions. The theorem above then states one can recover the full \Einf-algebra structure on the \etale \Fp-cochains of a smooth proper complex variety with good reduction mod $p$, by natural constructions applied to the prismatic complex over the prism $($\Ainf$,\textnormal{ker}(\theta))$. On the other hand, we have the following theorem of Mandell \cite{mandellEinf} (see also \cite{mandellEinf}, Remark 5.1).

\begin{thm}\label{mandellTheorem1} (Mandell) Let $\frak{H}$ denote the homotopy category of connected $p$-complete nilpotent spaces of finite $p$-type, and let $h\overline{\mathfrak{E}}$ denote the homotopy category of \Einf-\Fpbar-algebras. The singular cochain functor \FpbarFunctor induces a contravariant equivalence from $\frak{H}$ to a full subcategory of $h\overline{\mathfrak{E}}$. The quasi-inverse on the subcategory is given by $\overline{\mathbb{U}}$ the right derived functor of the functor $A \mapsto \textnormal{Hom}_{\overline{\mathfrak{E}}}(A,C^{*}(\Delta[n],\overline{\mathbb{F}_p}))$. Moreover, there is the following adjunction
\begin{equation*}
[X, \overline{\mathbb{U}}A] \cong [A, C^{*}_{\textnormal{sing}}(X,\overline{\mathbb{F}_p})]
\end{equation*} 
where $[-,-]$ denotes morphisms in the respective homotopy category. Moreover, for $X$ connected and of finite $p$-type, the natural map $X \rightarrow \overline{\mathbb{U}}C^{*}_{\textnormal{sing}}(X, \overline{\mathbb{F}_p})$ via the adjunction is naturally isomorphic to  $p$-completion in the sense of Bousfield-Kan.
\end{thm}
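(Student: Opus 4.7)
The plan is to attack the theorem in four interlocking steps: construct $\overline{\mathbb{U}}$ together with its adjunction by standard simplicial methods, reduce the hard content to a computation on Eilenberg-MacLane spaces via Postnikov induction, perform the Eilenberg-MacLane computation using the Steenrod algebra structure encoded in the $E_\infty$-algebra, and finally propagate the resulting identification with Bousfield-Kan $p$-completion back through the induction.

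First, I would realize $\overline{\mathbb{U}}$ concretely as the simplicial set $[n] \mapsto \text{Hom}_{\overline{\mathfrak{E}}}(A, C^*(\Delta[n], \overline{\mathbb{F}_p}))$, evaluated on a cofibrant replacement of $A$. This makes sense because $C^*(\Delta[\bullet], \overline{\mathbb{F}_p})$ is naturally a cosimplicial $E_\infty$-$\overline{\mathbb{F}_p}$-algebra. The adjunction $[X, \overline{\mathbb{U}} A] \cong [A, C^*_{\text{sing}}(X, \overline{\mathbb{F}_p})]$ is then essentially tautological: a simplicial map $X \to \overline{\mathbb{U}} A$ is simplex-wise a map $A \to C^*(\Delta[n], \overline{\mathbb{F}_p})$ compatible with faces and degeneracies, which assembles to a map $A \to C^*_{\text{sing}}(X, \overline{\mathbb{F}_p})$ via the standard cosimplicial presentation of singular cochains, and inverting this assembly on homotopy categories uses cofibrancy of $A$.

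Second, to show the unit $\eta_X \colon X \to \overline{\mathbb{U}} C^*_{\text{sing}}(X, \overline{\mathbb{F}_p})$ is a $p$-completion for $X$ connected nilpotent of finite $p$-type, I would induct along a principal refinement of the Postnikov tower of $X$. Each stage fits into a principal fibration whose fiber is $K(\pi, n)$ for $\pi$ a finitely generated abelian group. Assuming $\overline{\mathbb{U}}$ sends the cofiber sequences of $E_\infty$-algebras obtained by applying $C^*$ to such fibrations to fiber sequences of spaces, and that $\eta$ is a $p$-completion on both the base and the fiber, one concludes the same on the total space because Bousfield-Kan $p$-completion itself preserves these fibration sequences in the nilpotent, finite-type range.

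The main obstacle, and the technical heart of the argument, is the Eilenberg-MacLane case $X = K(\pi, n)$. Here one must recover $K(\pi, n)^{\wedge}_p$ from the bare $E_\infty$-$\overline{\mathbb{F}_p}$-algebra $C^*_{\text{sing}}(K(\pi, n), \overline{\mathbb{F}_p})$. I would argue by presenting this algebra via the Cartan-Serre computation of $H^*(K(\pi, n), \mathbb{F}_p)$ together with the $E_\infty$-structure encoding the full action of the Steenrod algebra and of the Frobenius. The crucial use of $\overline{\mathbb{F}_p}$ rather than $\mathbb{F}_p$ enters precisely here: over an algebraically closed field of characteristic $p$, the Frobenius on cohomology admits unique $p$-th roots, which rigidifies the $E_\infty$-structure enough to pin down the homotopy type, a rigidification that fails over $\mathbb{F}_p$. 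Once $\overline{\mathbb{U}} C^*_{\text{sing}}(K(\pi, n), \overline{\mathbb{F}_p})$ is matched with the cosimplicial resolution computing $K(\pi, n)^{\wedge}_p$ in the sense of Bousfield-Kan, the Postnikov induction of the preceding paragraph propagates the identification to every $X \in \mathfrak{H}$, and the adjunction formally upgrades this to the contravariant equivalence of $C^*_{\text{sing}}(-, \overline{\mathbb{F}_p})$ onto its essential image in $h\overline{\mathfrak{E}}$.
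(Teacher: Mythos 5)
First, note that the paper contains no proof of this statement: it is imported verbatim from Mandell \cite{mandellEinf} and used as a black box, so the only meaningful comparison is with Mandell's own argument. Measured against that, your outline has the correct global architecture (realize $\overline{\mathbb{U}}$ simplex-wise via the cosimplicial algebra $C^{*}(\Delta[\bullet],\overline{\mathbb{F}_p})$, deduce the adjunction formally, reduce along a principal Postnikov tower to Eilenberg--MacLane spaces), but it has a genuine gap exactly at the step that carries the content of the theorem: the Eilenberg--MacLane case. The sentence ``over an algebraically closed field the Frobenius admits unique $p$-th roots, which rigidifies the $E_\infty$-structure'' is not an argument, and perfectness of the field is not the mechanism. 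What Mandell actually does is (i) present $C^{*}(K(\mathbb{Z}/p,n),\overline{\mathbb{F}_p})$, up to weak equivalence, as a homotopy pushout obtained from the \emph{free} $E_\infty$-algebra on one $n$-dimensional class $x$ by forcing $P^{0}x=x$, where $P^{0}$ is the zeroth power operation, which acts as the identity on the cohomology of spaces but acts $\phi$-semilinearly (through the field Frobenius $\phi$) on free algebras --- a step that requires the Dyer--Lashof/Steenrod calculus for free $E_\infty$-algebras rather than the Cartan--Serre basis; and (ii) observe that $\overline{\mathbb{U}}$, being a right adjoint, converts this pushout into the homotopy fiber of $\phi-1\colon K(\overline{\mathbb{F}_p},n)\to K(\overline{\mathbb{F}_p},n)$, which is $K(\mathbb{F}_p,n)$ precisely because of the Artin--Schreier exact sequence $0\to\mathbb{F}_p\to\overline{\mathbb{F}_p}\xrightarrow{\phi-1}\overline{\mathbb{F}_p}\to 0$. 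It is the surjectivity of $\phi-1$ with kernel $\mathbb{F}_p$ --- not unique $p$-th roots --- that singles out $\overline{\mathbb{F}_p}$; over $\mathbb{F}_p$ the same fiber computation degenerates and ultimately produces the free loop space, which is exactly the content of Theorem \ref{mandellTheorem2}.

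The other load-bearing step is one you explicitly label as an assumption: that $\overline{\mathbb{U}}$ carries the cofiber sequences of $E_\infty$-algebras arising from principal fibrations to fiber sequences of spaces. Half of this is formal (a right adjoint sends homotopy pushouts to homotopy pullbacks), but the substantive half is that $C^{*}_{\textnormal{sing}}(-,\overline{\mathbb{F}_p})$ sends the principal fibration $K(\pi,n)\to X_{k}\to X_{k-1}$ to a homotopy pushout of $E_\infty$-algebras in the first place; this is an Eilenberg--Moore convergence statement and is where the nilpotence and finite $p$-type hypotheses enter. One must also commute $\overline{\mathbb{U}}C^{*}$ past the inverse limit of the Postnikov tower, and, for the final clause of the theorem, compare the unit with the Bousfield--Kan $\overline{\mathbb{F}_p}$-resolution when $X$ is merely connected of finite $p$-type rather than already $p$-complete. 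None of these claims is false, but each is a theorem in \cite{mandellEinf} rather than a formality, so as written the proposal defers essentially all of the proof's content to its two unproved assertions.
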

There is also the analog of the above theorem (\cite{mandellEinf}, Proposition A.2, A.3) for the singular cochain functor with coefficients in \Fp.
\begin{thm}\label{mandellTheorem2} (Mandell) Let $h\mathfrak{E}$ denote the homotopy category of \Einf-\Fp-algebras. The right derived functor $\mathbb{U}$ of the functor $A \mapsto \textnormal{Hom}_{\mathfrak{E}}(A,C^{*}(\Delta[n],\mathbb{F}_p))$ from the category of \Einf-\Fp-algebras to simplicial sets is right adjoint to the right derived functor of the singular cochain functor $C^{*}_{\textnormal{sing}}(-,\mathbb{F}_p)$, such that there is a natural isomorphism $LX \rightarrow \mathbb{U}C^{*}_{\textnormal{sing}}(X, \mathbb{F}_p)$ in the homotopy category for $X$ connected, $p$-complete, nilpotent, and of finite $p$-type, where $LX$ denotes the free loop space of $X$.  
\end{thm}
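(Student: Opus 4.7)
The plan is to treat the two assertions of the theorem separately: first the adjunction between $\mathbb{U}$ and $\mathbb{F}_p$-valued cochains, and then the identification of $\mathbb{U}C^{*}_{\textnormal{sing}}(X,\mathbb{F}_p)$ with the free loop space. For the adjunction, I would follow the template of Theorem \ref{mandellTheorem1}. The functor $A \mapsto \textnormal{Hom}_{\mathfrak{E}}(A, C^{*}(\Delta[n], \mathbb{F}_p))$ takes values in simplicial sets via the cosimplicial structure of the standard simplex, and its formal left adjoint is the $\mathbb{F}_p$-valued singular cochain functor. Equipping $\mathfrak{E}$ with the standard model structure (weak equivalences are quasi-isomorphisms on the underlying chain complexes) and checking that this pair is Quillen yields the desired adjunction on homotopy categories. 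This step is essentially formal, and Mandell's appendix argument should transport across with $\mathbb{F}_p$ replacing $\overline{\mathbb{F}_p}$ almost verbatim.

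The second claim is the substantive one, and the natural strategy is Galois descent along $\mathbb{F}_p \hookrightarrow \overline{\mathbb{F}_p}$. For any $E_{\infty}$-$\mathbb{F}_p$-algebra $A$, the base change $A \otimes_{\mathbb{F}_p} \overline{\mathbb{F}_p}$ is an $E_{\infty}$-$\overline{\mathbb{F}_p}$-algebra carrying a natural action of $G := \textnormal{Gal}(\overline{\mathbb{F}_p}/\mathbb{F}_p) \cong \hat{\mathbb{Z}}$ through Frobenius on the scalars, and faithfully flat descent should give
\[
\mathbb{U}A \simeq \bigl( \overline{\mathbb{U}}(A \otimes_{\mathbb{F}_p} \overline{\mathbb{F}_p}) \bigr)^{hG}.
\]
Applied to $A = C^{*}_{\textnormal{sing}}(X,\mathbb{F}_p)$, one has $A \otimes_{\mathbb{F}_p} \overline{\mathbb{F}_p} \simeq C^{*}_{\textnormal{sing}}(X,\overline{\mathbb{F}_p})$, and Theorem \ref{mandellTheorem1} identifies $\overline{\mathbb{U}}$ of the right-hand side with $X$ (which is already $p$-complete by hypothesis). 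Because $X$ is a purely topological input, the induced $G$-action on $X$ is trivial, so the formula collapses to $X^{hG} \simeq \textnormal{Map}(BG, X)$.

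Since $X$ is $p$-complete of finite $p$-type, mapping into $X$ detects only the pro-$p$ part of $G$, so $\textnormal{Map}(BG, X) \simeq \textnormal{Map}(B\mathbb{Z}_p, X)$; moreover, $B\mathbb{Z}_p$ is the $p$-profinite completion of $B\mathbb{Z} \simeq S^{1}$, and mapping into a $p$-complete target of finite $p$-type identifies this further with $\textnormal{Map}(S^{1}, X) = LX$. The main obstacle is justifying the Galois descent formula $\mathbb{U}A \simeq (\overline{\mathbb{U}}(A \otimes_{\mathbb{F}_p} \overline{\mathbb{F}_p}))^{hG}$ at the level of the right derived functors: one must verify that $\overline{\mathbb{U}}$ commutes with the homotopy limit over $BG$, which ultimately rests on good behavior of $E_{\infty}$-algebra base change along the pro-Galois extension $\overline{\mathbb{F}_p}/\mathbb{F}_p$. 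Once this is in place, identifying the trivial action and unwinding the mapping spaces should be routine.
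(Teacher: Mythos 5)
The paper does not actually prove this statement: it is quoted verbatim from Mandell (\cite{mandellEinf}, Propositions A.2 and A.3), and the only indication of its proof is the sketch in the paper's closing remark, which says that Mandell obtains $\mathbb{U}A$ as the homotopy fixed points of the Frobenius on $A\otimes_{\mathbb{F}_p}\overline{\mathbb{F}_p}$ and identifies these, for $A = C^{*}_{\textnormal{sing}}(X,\mathbb{F}_p)$ with $X$ $p$-complete nilpotent of finite $p$-type, with the homotopy fixed points of the trivial action on $X$, i.e.\ with $LX$. Your proposal is the same strategy in outline (formal Quillen adjunction for the first clause; descent along $\mathbb{F}_p \rightarrow \overline{\mathbb{F}_p}$ plus triviality of the Frobenius action for the second), so you have correctly reverse-engineered the mechanism.

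The one substantive correction concerns the group you descend along. Mandell's statement is not $\hat{\mathbb{Z}}$-homotopy-fixed-points but the homotopy equalizer of $\textnormal{id}$ and the single Frobenius automorphism $\phi$, i.e.\ fixed points for the $\mathbb{Z}$-action $\phi$ generates; the algebraic input is the Artin--Schreier exact sequence $0 \rightarrow \mathbb{F}_p \rightarrow \overline{\mathbb{F}_p} \xrightarrow{\phi - 1} \overline{\mathbb{F}_p} \rightarrow 0$, which gives $A \simeq \textnormal{fib}\bigl(A\otimes_{\mathbb{F}_p}\overline{\mathbb{F}_p} \xrightarrow{\phi-1} A\otimes_{\mathbb{F}_p}\overline{\mathbb{F}_p}\bigr)$ directly. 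With this formulation the free loop space appears immediately: the homotopy equalizer of $\textnormal{id}$ and $\textnormal{id}$ on $X$ is $X\times^{h}_{X\times X}X = \textnormal{Map}(B\mathbb{Z},X) = \textnormal{Map}(S^{1},X) = LX$, and your entire detour through $B\hat{\mathbb{Z}}$, $B\mathbb{Z}_p$, and the $p$-completion of the circle evaporates. As written, your descent formula $\mathbb{U}A \simeq (\overline{\mathbb{U}}(A\otimes_{\mathbb{F}_p}\overline{\mathbb{F}_p}))^{hG}$ for $G=\hat{\mathbb{Z}}$ is the delicate point you yourself flag, and it is genuinely problematic: faithfully flat descent produces the Amitsur cosimplicial object with terms $A\otimes\overline{\mathbb{F}_p}^{\otimes n}$, and since $\overline{\mathbb{F}_p}\otimes_{\mathbb{F}_p}\overline{\mathbb{F}_p} \cong \textnormal{Cont}(\hat{\mathbb{Z}},\overline{\mathbb{F}_p})$ this yields \emph{continuous} $\hat{\mathbb{Z}}$-fixed points, not $\textnormal{Map}(B\hat{\mathbb{Z}},-)$ for the discrete group; the clean way to resolve this is precisely to replace $\hat{\mathbb{Z}}$ by its dense subgroup $\mathbb{Z}=\langle\phi\rangle$, as Mandell does. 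Finally, note that the triviality of the $\phi$-action on $\overline{\mathbb{U}}C^{*}_{\textnormal{sing}}(X,\overline{\mathbb{F}_p}) \simeq X^{\wedge}_{p}$ is not automatic from ``$X$ is a purely topological input'': one must check that the unit map $X \rightarrow \overline{\mathbb{U}}(C^{*}_{\textnormal{sing}}(X,\mathbb{F}_p)\otimes_{\mathbb{F}_p}\overline{\mathbb{F}_p})$ is equivariant for the trivial action on the source (this is the content of Theorem \ref{mandellTheorem3} and Mandell's Appendix B), so this step deserves more than a sentence.
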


In other words, taking coefficients in $\mathbb{F}_p$ recovers the free loop space of the $p$-adic homotopy type. An immediate consequence of the theorems above is then the following trivial observation:

\begin{thm}\label{MAINTHEOREM} Let $X$ be as in the hypotheses of Theorem \ref{BMSetale}. Assume further that $X$ is nilpotent and finite $p$-type. Then $\mathbb{U}(\textnormal{R}\Gamma(\mathfrak{X},\Prism_{\mathfrak{X}/A_{\textnormal{inf}}})\otimes^{L}_{A_{\textnormal{inf}}} C^{\flat})^{\phi=1} $  is the free loop space of the Bousfield-Kan $p$-completion of the complex variety $X$. Similarly, $\overline{\mathbb{U}}((\textnormal{R}\Gamma(\mathfrak{X},\Prism_{\mathfrak{X}/A_{\textnormal{inf}}})\otimes^{L}_{A_{\textnormal{inf}}} C^{\flat})^{\phi=1} \otimes^L_{\mathbb{F}_p} \overline{\mathbb{F}_p})$ is the Sullivan $p$-completion of the complex variety $X$.
\end{thm}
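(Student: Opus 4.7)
The plan is to chain the \'etale comparison theorem with Artin's theorem and then apply Mandell's reconstruction functors; this reduces the proof to a verification that the $E_\infty$-algebra structures agree across the various quasi-isomorphisms involved. Theorem \ref{BMSetale}, in its refined form emphasized in the discussion following its statement, already provides the first link: the prismatic expression $(\textnormal{R}\Gamma(\mathfrak{X},\Prism_{\mathfrak{X}/A_{\textnormal{inf}}})\otimes^L_{A_{\textnormal{inf}}} C^{\flat})^{\phi=1}$ is equivalent as an $E_\infty$-$\mathbb{F}_p$-algebra to $\textnormal{R}\Gamma_{\textnormal{\'et}}(X, \mathbb{F}_p)$, both sides carrying the natural $E_\infty$-structures arising from Godement resolutions.

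Next I would identify $\textnormal{R}\Gamma_{\textnormal{\'et}}(X, \mathbb{F}_p)$ with $C^{*}_{\textnormal{sing}}(X(\mathbb{C}), \mathbb{F}_p)$ as $E_\infty$-$\mathbb{F}_p$-algebras via the Artin comparison theorem. Since both sides inherit their $E_\infty$-structures from Godement resolutions---one on the \'etale site of $X$, the other on the classical topological site of $X(\mathbb{C})$---the natural comparison map induced by the change-of-site morphism is an $E_\infty$-algebra quasi-isomorphism, an Artin-type statement term-by-term. Having identified the prismatic expression with singular $\mathbb{F}_p$-cochains, and given the hypotheses that $X(\mathbb{C})$ is connected, nilpotent, and of finite $p$-type, Theorem \ref{mandellTheorem2} applies directly: $\mathbb{U}$ of the prismatic expression is the free loop space $LX$ of the Bousfield-Kan $p$-completion of $X(\mathbb{C})$, proving the first claim.

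For the second assertion, derived base change along the flat map $\mathbb{F}_p \to \overline{\mathbb{F}_p}$ identifies the $\overline{\mathbb{F}_p}$-base change of the prismatic expression with $C^{*}_{\textnormal{sing}}(X(\mathbb{C}), \overline{\mathbb{F}_p})$ as an $E_\infty$-$\overline{\mathbb{F}_p}$-algebra, via the universal coefficient theorem applied levelwise to a Godement resolution. Applying $\overline{\mathbb{U}}$ from Theorem \ref{mandellTheorem1} then returns the Bousfield-Kan, equivalently Sullivan, $p$-completion of $X(\mathbb{C})$. The main (and essentially only nontrivial) obstacle throughout is the bookkeeping of $E_\infty$-structures across the three comparison maps---prismatic to \'etale, \'etale to singular, and $\mathbb{F}_p$ to $\overline{\mathbb{F}_p}$---which ultimately rests on the naturality of Godement resolutions along the relevant morphisms of sites and coefficient rings; once that is in hand, the result is indeed a trivial composition of the three cited theorems.
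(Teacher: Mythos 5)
Your proposal follows the same overall chain as the paper: Theorem \ref{BMSetale} (in its $E_\infty$-refined form), the Artin comparison of Section \ref{section2}, the sheaf-to-singular comparison of Theorem \ref{ConstantandSing}, and then Mandell's functors; the first assertion is handled exactly as in the paper via Theorem \ref{mandellTheorem2}. Where you diverge is in the second assertion. The paper does \emph{not} pass through $C^{*}_{\textnormal{sing}}(X(\C),\overline{\mathbb{F}_p})$ and Theorem \ref{mandellTheorem1}; it applies $-\otimes^L_{\mathbb{F}_p}\overline{\mathbb{F}_p}$ and invokes Theorem \ref{mandellTheorem3} (Mandell's Theorem B.1), which says directly that for \emph{any} connected simplicial set the map $X \to \overline{\mathbb{U}}(C^{*}_{\textnormal{sing}}(X,\mathbb{F}_p)\otimes_{\mathbb{F}_p}\overline{\mathbb{F}_p})$ is Sullivan $p$-completion. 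That route sidesteps two things your argument needs to supply: (i) the identification of $C^{*}_{\textnormal{sing}}(X,\mathbb{F}_p)\otimes_{\mathbb{F}_p}\overline{\mathbb{F}_p}$ with $C^{*}_{\textnormal{sing}}(X,\overline{\mathbb{F}_p})$ as $E_\infty$-$\overline{\mathbb{F}_p}$-algebras, which is more delicate than a levelwise universal-coefficient argument because there are two a priori different operad actions on the base change (the extension-of-scalars structure of Lemma \ref{extensionScalars} versus the colimit-over-finite-quotients structure of Lemma \ref{secondExtensionScalars}), and the paper spends two lemmas showing they agree; and (ii) your assertion that Bousfield--Kan and Sullivan $p$-completion coincide under the mere nilpotent finite-$p$-type hypothesis --- the paper only claims this coincidence for simply connected finite-type spaces (hence the separate Corollary), and deliberately phrases the second half of the theorem in terms of Sullivan completion so that Theorem \ref{mandellTheorem3} applies without it. Neither point is fatal --- both can be repaired --- but as written your second half leans on unproved identifications that the paper's choice of Theorem \ref{mandellTheorem3} is specifically designed to avoid.
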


Recall that for simply connected CW-complexes of finite type, the Bousfield-Kan $p$-completion and Sullivan $p$-completion coincide. So as a corollary we have

\begin{cor} Let $X$ be as in the hypotheses of Theorem \ref{BMSetale}. Assume $X$ is simply connected and finite $p$-type. Then $\mathbb{U}(\textnormal{R}\Gamma(\mathfrak{X},\Prism_{\mathfrak{X}/A_{\textnormal{inf}}})\otimes^{L}_{A_{\textnormal{inf}}} C^{\flat})^{\phi=1} $ is the free loop space of the $p$-completion of $X$, and  $\overline{\mathbb{U}}((\textnormal{R}\Gamma(\mathfrak{X},\Prism_{\mathfrak{X}/A_{\textnormal{inf}}})\otimes^{L}_{A_{\textnormal{inf}}} C^{\flat})^{\phi=1} \otimes^L_{\mathbb{F}_p} \overline{\mathbb{F}_p})$ is the $p$-completion of $X$.
\end{cor}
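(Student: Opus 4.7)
The plan is to derive the corollary directly from Theorem \ref{MAINTHEOREM} by checking that the stronger hypotheses here imply the hypotheses of that theorem, and then invoking the stated coincidence of the Bousfield--Kan and Sullivan $p$-completions in the simply connected case.

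First I would verify that the hypotheses of Theorem \ref{MAINTHEOREM} are satisfied. Since a simply connected space is automatically nilpotent (the trivial fundamental group acts trivially, hence nilpotently, on every higher homotopy group, and the higher homotopy groups are trivially nilpotent $\pi_1$-modules), the assumption that $X$ is simply connected and of finite $p$-type implies that $X$ is nilpotent and of finite $p$-type. Thus Theorem \ref{MAINTHEOREM} applies, giving that $\mathbb{U}(\textnormal{R}\Gamma(\mathfrak{X},\Prism_{\mathfrak{X}/A_{\textnormal{inf}}})\otimes^{L}_{A_{\textnormal{inf}}} C^{\flat})^{\phi=1}$ is the free loop space of the Bousfield--Kan $p$-completion of $X$, and $\overline{\mathbb{U}}((\textnormal{R}\Gamma(\mathfrak{X},\Prism_{\mathfrak{X}/A_{\textnormal{inf}}})\otimes^{L}_{A_{\textnormal{inf}}} C^{\flat})^{\phi=1} \otimes^L_{\mathbb{F}_p} \overline{\mathbb{F}_p})$ is the Sullivan $p$-completion of $X$.

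Second, I would invoke the classical fact, recalled just before the corollary in the paper, that for a simply connected CW-complex of finite type the Bousfield--Kan and Sullivan $p$-completions are naturally weakly equivalent. A smooth projective complex variety has the homotopy type of such a CW-complex (by triangulability of real algebraic sets, or more directly from Morse theory on projective manifolds), and we have assumed it is simply connected and of finite $p$-type. Substituting ``$p$-completion of $X$'' for both ``Bousfield--Kan $p$-completion'' and ``Sullivan $p$-completion'' in the conclusion of Theorem \ref{MAINTHEOREM} yields both statements of the corollary.

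There is essentially no obstacle: the content is already packaged in Theorem \ref{MAINTHEOREM}, and the only thing to check is the elementary implication simply connected $\Rightarrow$ nilpotent together with the cited comparison of completions. If there is any subtlety to be careful about, it is that the free loop space formation commutes with the identification of the two completions, which is immediate since they are linked by a natural weak equivalence and $L(-)$ is a homotopy functor on pointed connected spaces of the relevant type.
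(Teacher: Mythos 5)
Your proposal is correct and follows exactly the paper's (implicit) argument: the corollary is deduced from Theorem \ref{MAINTHEOREM} by observing that simply connected implies nilpotent, together with the recalled fact that the Bousfield--Kan and Sullivan $p$-completions agree for simply connected CW-complexes of finite type. Your write-up merely makes explicit the details the paper leaves to the reader.
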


That is, for a smooth proper variety over $\C$ with good reduction mod $p$, natural constructions applied to its prismatic complex over \Ainf yield its $p$-adic homotopy type (in the senses above). 
\\ \\
The purpose of this note covers the relevant comparisons between \etale cochains and singular cochains needed to prove this observation. It is the opinion of the author that these comparisons are well known to experts but somewhat difficult to find stated explicitly in the literature.
\\ \\
\textbf{Notation and conventions}. All varieties over $\C$ are assumed connected in the analytic topology. We use $k$ to denote the ground field, usually $\mathbb{F}_p$ or \Fpbar. All \Einf-$k$-algebras are chain complexes of $k$-modules with an action of a fixed \Einf-operad $\mathcal{E}_k$ in $\textnormal{Ch}(k\textnormal{-mod})$, where $\mathcal{E}_k$ has a fixed map of operads $\mathcal{E}_k \rightarrow \mathcal{Z}_k$ to the Eilenberg-Zilbur operad $\mathcal{Z}_k$. We refer the proof that such a map of operads always exists to \cite{mandellEinf}. Often we will omit the subscript $k$ if it is clear in context. By a \textit{quasi-isomorphism of \Einf-algebras}, we mean a morphism of \Einf-algebras that induces a quasi-isomorphism on the underlying chain complexes. Given a site $\mathcal{C}$, we write $\textnormal{AbSh}(\mathcal{C})$ for the category of abelian sheaves on $\mathcal{C}$. We write $\textnormal{Sh}(\mathcal{C})$ and $\textnormal{PSh}(\mathcal{C})$ for the category of sheaves and presheaves of sets respectively on $\mathcal{C}$, and $\textnormal{Sh}(\mathcal{C},D)$ for sheaves with values in a category $D$. $\textnormal{\textbf{Ab}}$ denotes the category of abelian groups, and $D\textnormal{AbSh}(\mathcal{C})$ and $D\textnormal{\textbf{Ab}}$ denote the respective derived categories. If $C$ is some category, then $\Delta C$ denotes the category of cosimplicial objects of $C$. We avoid any $\infty$-categorical language because the author does not know it. 
\\ \\
\textbf{Acknowledgements}. The author would like to thank Joana Cirici and Dan Petersen for helpful comments about \Einf-algebras.

\section{\Etale cochains and \Einf-Artin comparison}\label{section2}
In this mainly expository section, we review some classical theorems regarding the \Einf-algebra structure on \etale cochains. We emphasize there are no original theorems proven in this section; many of the arguments can be found in, for example, \cite{chataur}, \cite{petersen}, and \cite{rodriguez}, with the main conceptual ideas originating from Godement \cite{godement}. The aim of this section is to cover the following well known theorem.

\begin{thm}\label{maincomparison} Let $X$ be a smooth proper complex variety. There is a quasi-isomorphism of \Einf-algebras between the singular \Fp-cochains \FpCochains and the \etale \Fp-cochains {\textnormal{R}$\Gamma_{\textnormal{\'et}}(X, \mathbb{F}_p)$}.
\end{thm}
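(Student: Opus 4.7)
The plan is to factor the desired quasi-isomorphism through the sheaf cohomology \ConstantSheafCochains of the constant sheaf on the analytic site of $X(\mathbb{C})$, realising all three cochain complexes as \Einf-$\mathbb{F}_p$-algebras by a single mechanism. That mechanism is: whenever $C^\bullet$ is a cosimplicial commutative $\mathbb{F}_p$-algebra, its normalized cochain complex $NC^\bullet$ carries a natural action of the Eilenberg-Zilber operad $\mathcal{Z}_{\mathbb{F}_p}$, and hence of the fixed \Einf-operad $\mathcal{E}_{\mathbb{F}_p}$ from the introduction. Because every step of the comparison zig-zag will be induced by a morphism of cosimplicial commutative $\mathbb{F}_p$-algebras, each step automatically becomes a morphism of \Einf-algebras; only the quasi-isomorphism property needs to be checked separately.

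For the two sheaf-theoretic sides, I would invoke the Godement monad $T = \prod_x x_\ast x^\ast$ on a site with enough points (both $X_{\textnormal{\'et}}$ and $X(\mathbb{C})_{\textnormal{an}}$ qualify), which produces a functorial cosimplicial flabby resolution $G^\bullet \mathcal{F}$ of any sheaf $\mathcal{F}$, and is cosimplicially commutative whenever $\mathcal{F}$ is a sheaf of commutative $\mathbb{F}_p$-algebras. Applied to $\underline{\mathbb{F}_p}$ on the two sites and then globally sectioned, this yields \Einf-models for \FpEtaleCochains and \ConstantSheafCochains. The change-of-site morphism $\epsilon: X(\mathbb{C})_{\textnormal{an}} \to X_{\textnormal{\'et}}$ pulls the étale Godement resolution back to a cosimplicial commutative resolution of $\underline{\mathbb{F}_p}$ on the analytic site, and universality of the Godement construction then gives a comparison morphism (of cosimplicial sheaves of commutative $\mathbb{F}_p$-algebras) to $G^\bullet_{\textnormal{an}} \underline{\mathbb{F}_p}$; on global sections this is Artin's classical comparison map, which is a quasi-isomorphism for torsion coefficients on a smooth proper $\mathbb{C}$-variety.

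The main obstacle is comparing \FpCochains with \ConstantSheafCochains as \Einf-algebras. The key point is that \FpCochains is the normalized cochain complex of the cosimplicial commutative $\mathbb{F}_p$-algebra $[n] \mapsto \mathbb{F}_p^{\textnormal{Sing}(X(\mathbb{C}))_n}$, and this globalizes to the cosimplicial presheaf of commutative $\mathbb{F}_p$-algebras $U \mapsto \mathbb{F}_p^{\textnormal{Sing}(U)_n}$ on $X(\mathbb{C})_{\textnormal{an}}$, whose sheafification is a resolution of $\underline{\mathbb{F}_p}$ by local contractibility of $X(\mathbb{C})$. Universality of the Godement resolution then produces a map of cosimplicial sheaves of commutative $\mathbb{F}_p$-algebras from this singular-cochain resolution into $G^\bullet_{\textnormal{an}} \underline{\mathbb{F}_p}$, which on global sections gives the desired \Einf-quasi-isomorphism. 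The delicate part is identifying the global sections of the sheafified singular-cochain complex with the usual \FpCochains as \Einf-algebras — one needs the sheafification not to destroy the shuffle-product structure, and one must check that the presheaf-to-sheaf comparison is itself an \Einf-quasi-isomorphism — and this is where I expect the most care to be required; once this is in place, composing the two \Einf-quasi-isomorphisms finishes the proof.
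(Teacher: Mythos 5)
Your overall strategy is the same as the paper's: realize all three complexes as \Einf-algebras by combining the Godement cosimplicial resolution with the Hinich--Schechtman theorem that normalization of a cosimplicial operadic algebra yields an algebra over the Eilenberg--Zilber operad, route the comparison through the sheaf cohomology of the constant sheaf on the analytic site, and separate the \Einf-structure question from the quasi-isomorphism question (Artin comparison on one side, local contractibility on the other).

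One step as written does not quite parse, and it is exactly the step the paper spends the most effort on. You invoke ``the change-of-site morphism $\epsilon: X(\C)_{\textnormal{an}} \rightarrow X_{\textnormal{\'et}}$'' and propose to pull the \etale Godement resolution back along it. But there is no continuous functor between the site of analytic open subsets of $X(\C)$ and $X_{\textnormal{\'et}}$ inducing this morphism: the analytification of an \etale map $X' \rightarrow X$ is a local homeomorphism onto $X(\C)$, not an open immersion, so it is not an object of the open-set site. Since the Godement comparison machinery (both in the paper and in Rodr\'iguez-Gonz\'alez--Roig) is set up for morphisms of sites induced by continuous functors between sites with enough points, one needs the intermediate site $X_{\textnormal{cl}}$ of local homeomorphisms over $X(\C)$, i.e.\ the zig-zag $X(\C) \leftarrow X_{\textnormal{cl}} \rightarrow X_{\textnormal{\'et}}$, together with the verifications that $X_{\textnormal{cl}}$ has enough points and that $\delta$ induces an equivalence of topoi reflecting injections and surjections. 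This is a repairable omission rather than a wrong idea, but the repair occupies most of the paper's Section 2. For your final step (presheaf singular cochains versus the sheafified resolution of $\underline{\mathbb{F}_p}$), you have correctly isolated the delicate point about sheafification and the presheaf-to-sheaf comparison; the paper does not carry this out either, and instead cites Petersen, whose argument runs the same Godement mechanism under a local contractibility hypothesis.
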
 

Here is an outline of the section: first, we compare the \etale site of $X$ over $\C$ with the site of analytic open sets on its underlying complex manifold $X(\C)$, by passing to the site of local homeomorphisms mapping to $X(\C)$. We analyze the site of local homeomorphisms and show it has enough points. We then use the Godement resolution on all three sites to obtain \Einf-algebras on their sheaf cohomologies; the quasi-isomorphism of the underlying complexes is omitted.
\\ \\

We have the following theorem from SGA IV (XII-4) \cite{SGA4}; we provide a translation of part of the proof, with some details provided using lemmas from the Stacks Project \cite{stacks}.

\begin{thm}\label{zigzagsites} Let $X$ be a smooth proper complex variety. There is a zig-zag of sites
\begin{center}
\begin{tikzcd}[column sep = small, row sep = small]
& X_{\textnormal{cl}} \arrow[dl, "\delta" above left] \arrow[dr, "\varepsilon" above right] \\ 
X(\C) & & X_{\textnormal{\'{e}t}}
\end{tikzcd}
\end{center}
where $X(\C)$ denotes the site of analytic open sets on the underlying complex manifold of $X$, $X_{\textnormal{cl}}$ denotes the site of local homeomorphisms $U \rightarrow X(\C)$, and $X_{\textnormal{{\'{e}t}}}$ denotes the \etale site of $X$. 
\end{thm}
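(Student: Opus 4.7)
The plan is to construct the two morphisms of sites $\delta$ and $\varepsilon$ by exhibiting the associated continuous functors in the opposite direction. Recall that a morphism of sites $f\colon \mathcal{C} \to \mathcal{D}$ is specified by a continuous functor $u\colon \mathcal{D} \to \mathcal{C}$ such that the induced inverse image functor on sheaves of sets is left exact. So I would produce functors $u_\delta \colon X(\C) \to X_{\textnormal{cl}}$ and $u_\varepsilon \colon X_{\textnormal{\'{e}t}} \to X_{\textnormal{cl}}$ with these properties.

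For $\delta$, take $u_\delta$ to be the inclusion sending an analytic open subset $U \hookrightarrow X(\C)$ to itself, now regarded as a local homeomorphism over $X(\C)$. This is well-defined because every open embedding is a local homeomorphism; it sends analytic open covers of $U$ to covers of $U$ in $X_{\textnormal{cl}}$; and fiber products of open embeddings are intersections, computed in the same way in either category. Hence $u_\delta$ is continuous and left exact, yielding the desired morphism $\delta$.

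For $\varepsilon$, define $u_\varepsilon$ by analytification: an \etale $X$-scheme $V \to X$ is sent to the induced continuous map $V(\C) \to X(\C)$ of underlying complex manifolds. The essential geometric input is the classical fact that an \etale morphism of smooth complex varieties is a local biholomorphism on $\C$-points; this is seen by reducing to the standard local model of an \etale map (a Zariski open inside $\textnormal{Spec}\,\mathcal{O}_X[t]/(f)$ with $f'$ invertible) and applying the complex-analytic inverse function theorem. Jointly surjective \etale families then analytify to jointly surjective families of local homeomorphisms on $\C$-points, and analytification commutes with fiber products, so $u_\varepsilon$ is continuous and left exact as well.

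The main obstacle is not the existence of the functors but the bookkeeping required to check the precise SGA IV conditions for a morphism of sites, namely that the induced pullback on sheaves of sets is exact. Both $u_\delta$ and $u_\varepsilon$ are, however, geometric constructions compatible with fiber products, so the verification reduces to elementary facts about open covers in the analytic topology and the local structure of \etale morphisms recalled above. Full details are worked out in SGA IV expos\'e XII-4 \cite{SGA4} and the Stacks Project \cite{stacks}.
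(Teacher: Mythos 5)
Your proposal is correct and follows essentially the same route as the paper: both morphisms of sites are induced by the continuous functors $X(\C)\hookrightarrow X_{\textnormal{cl}}$ (open inclusions are local homeomorphisms) and $X_{\textnormal{\'et}}\to X_{\textnormal{cl}}$, $X'\mapsto X'(\C)$ (\etale maps analytify to local homeomorphisms via the local structure theory and the inverse/implicit function theorem). The paper's proof is simply a terser version of the same argument, likewise deferring the site-theoretic bookkeeping to SGA IV.
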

\begin{proof} An object of $X_{\textnormal{cl}}$ is a continous map of topological spaces $f: U \rightarrow X(\C)$ such that for every point $x \in U$, there is a neighborhood $U_x$ such that the restriction of $f$ to $U_x$ is a homeomorphism onto an open neighborhood around $f(x)$; that is, $f$ is a local homeomorphism. Since inclusions of open sets in $X(\C)$ are local homeomorphisms, we obtain a morphism of sites $\delta: X_{\textnormal{cl}} \rightarrow X(\C)$ by the continuous functor $U \mapsto (U \hookrightarrow X(\C))$; this continuous functor is the inclusion of categories $X(\C) \subset X_{\textnormal{cl}}$. 
\\ \\
On the other hand, let $f: X' \rightarrow X$ be \'etale. Then the induced map on the underlying smooth manifolds $f(\C): X'(\C) \rightarrow X(\C)$ is a local isomorphism, by the Jacobian criterion and implicit function theorem. The functor $X' \mapsto X'(\C)$ then induces the morphism of sites $\varepsilon: X_{\textnormal{cl}} \rightarrow X_{\textnormal{\'et}}$. $\qed$
\end{proof}

\begin{lemma}\label{deltareflects} The functor $\delta_{*}$ sends surjective maps of sheaves of sets to surjective maps of sheaves of sets. Moreover, it is an equivalence of the associated topoi, and reflects injections and surjections.
\end{lemma}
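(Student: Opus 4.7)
The plan is to recognize $X_{\textnormal{cl}}$ as the topological analogue of the small \'etale site, so that the classical equivalence between sheaves on a topological space and \'etale spaces (local homeomorphisms) over it identifies $\textnormal{Sh}(X_{\textnormal{cl}})$ with $\textnormal{Sh}(X(\C))$, with the equivalence given by $\delta$ itself; the other assertions will then follow formally. Concretely, the continuous functor defining $\delta$ is the inclusion $u : X(\C) \hookrightarrow X_{\textnormal{cl}}$, $V \mapsto (V \hookrightarrow X(\C))$, so for a sheaf $F$ on $X_{\textnormal{cl}}$ and an analytic open $V \subset X(\C)$ one has $\delta_* F(V) = F(V \hookrightarrow X(\C))$; that is, $\delta_*$ is the restriction of $F$ to the full subcategory of opens sitting inside $X_{\textnormal{cl}}$.

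To upgrade this to an equivalence I would appeal to the Comparison Lemma (\cite{SGA4}, Exp. III, Thm. 4.1), verifying: (i) $u$ is fully faithful, which is automatic since $X(\C)$ is a full subcategory of $X_{\textnormal{cl}}$; (ii) every object $f : U \to X(\C)$ of $X_{\textnormal{cl}}$ admits a covering by objects of the form $u(V)$, obtained by choosing, for each $x \in U$, a neighborhood $U_x$ on which $f$ restricts to a homeomorphism onto an open $V_x \subset X(\C)$, so that $(U_x \to X(\C)) \cong u(V_x)$ in $X_{\textnormal{cl}}$ and $\{U_x \to U\}$ covers $U$; and (iii) $u$ is both continuous and cocontinuous, which follows directly from the observations that open inclusions are local homeomorphisms and that local homeomorphisms are themselves locally open inclusions (via the same $U_x$). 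The conclusion of the lemma is precisely that $\delta_*$ is an equivalence of topoi.

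With the equivalence in hand the remaining assertions are formal: any equivalence of categories preserves and reflects every categorically defined property, in particular monomorphisms and epimorphisms, and in a topos (here $\textnormal{Sh}(X(\C))$ and $\textnormal{Sh}(X_{\textnormal{cl}})$) these are precisely the injections and surjections of sheaves of sets. Hence $\delta_*$ preserves surjections and reflects both injections and surjections. The main place where care is required — and essentially the only nontrivial input — is the verification of the covering condition (ii) together with the cocontinuity of $u$; this is where it is easiest to confuse a local homeomorphism for a genuine open inclusion and misapply the hypotheses of the Comparison Lemma. Once this site-theoretic bookkeeping is correct, the rest is purely formal.
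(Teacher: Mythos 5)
Your proof is correct and follows essentially the same route as the paper: the decisive geometric input in both is that every local homeomorphism $U \rightarrow X(\C)$ is covered by open embeddings of $X(\C)$, which is then fed into a site-comparison result (you invoke the SGA IV Comparison Lemma; the paper invokes the equivalent Stacks Project lemmas on cocontinuous functors, identifying $\delta_{*}$ with the inverse of the induced equivalence). The only organizational difference is that you deduce preservation of surjections and reflection of injections and surjections formally from the equivalence, whereas the paper establishes those two properties first via separate lemmas; both orderings are valid.
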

\begin{proof} For each local homeomorphism $f: U \rightarrow X(\C)$, for each $x \in U$, there is a neighborhood homeomorphic to an open neighborhood around $f(x)$ in $X(\C)$. Thus, we can cover $U$ by open sets that are homeomorphic to open sets in $X(\C)$; that is, there exists a family of open sets $\{U_i \hookrightarrow X(\C)\}$ such that we have a commutative diagram of local homeomorphisms

\begin{center}
\begin{tikzcd}
U_i \arrow[rr, hookrightarrow] \arrow[dr, hookrightarrow] & & U \arrow[dl] \\
& X(\C)
\end{tikzcd}
\end{center}
and where $U$ is the union of the images of the maps from $U_i$. The above geometric argument immediately implies that the hypotheses of (\cite{stacks}, Tag 04D5, Lemma 7.41.2) are satisfied, and so $\delta_{*}$ sends surjective maps of sheaves to surjective maps of sheaves. Similarly, the hypotheses of (\cite{stacks}, Tag 04D5, Lemma 7.41.4) are also satisfied, so $\delta_{*}$ reflects injections and surjections.
\\ \\
Lastly, to show $\delta_{*}$ is an equivalence of topoi, notice that the inclusion functor $X(\C) \hookrightarrow X_{\textnormal{cl}}$ is cocontinuous, and that the hypotheses of (\cite{stacks}, Tag 039Z, Lemma 7.29.1) are likewise satisfied by the above geometric argument. The morphism of topoi $g: \textnormal{Sh}(X(\C)) \rightarrow \textnormal{Sh}(X_{\textnormal{cl}})$ associated to the inclusion as a cocontinuous functor is then an equivalence, with the adjunction mappings $g^{-1}g_{*}\mathcal{F} \rightarrow \mathcal{F}$ and $\mathcal{G} \rightarrow g_{*}g^{-1}\mathcal{G}$ being isomorphisms. It follows from the definition of induced morphism of topoi from a cocontinuous functor that $\delta_{*} = g^{-1}$; in fact, since $g^{-1}$ is left adjoint to $g_{*}$, this shows $\delta_{*}$ is right exact as well. 
\end{proof}
\textbf{Remark.} The functor $\delta^{-1}$ is \textit{not} the same as the induced functor $g_{*}$ in the argument above; the map of topoi $(\delta^{-1}, \delta_{*})$ is induced from the inclusion as a \textit{continuous} functor, whereas the map of topoi $(g^{-1}, g_{*})$ is induced from the inclusion as a \textit{cocontinuous} functor. 
\\ \\
The above theorem and lemma say that one can replace $X(\C)$ with $X_{\textnormal{cl}}$ for calculation of usual sheaf cohomology. Moreover there is the following property of the morphism $\varepsilon$, again explained in SGA IV (XII-4).

\begin{thm} Let $X$ be smooth and proper over $\C$. There is an equivalence of categories given by the quasi-inverse functors $\varepsilon_{*}$ and $\varepsilon^{*}$ between the category of locally constant constructible torsion sheaves on $X_{\textnormal{\'et}}$, and the category of locally constant finite fiber torsion sheaves on $X_{\textnormal{cl}}$.
\end{thm}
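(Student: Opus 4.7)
The plan is to identify both sides with the category of finite continuous modules over a fundamental group, and then invoke the Riemann existence theorem to match the two fundamental groups up to profinite completion. Since $X$ is assumed connected (by the conventions of the paper), the choice of a geometric point $\bar{x}$ lying over a point $x \in X(\C)$ gives a fiber functor on both sides.

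First, I would apply classical Grothendieck Galois theory for the \'etale site: the category of locally constant constructible torsion sheaves on $X_{\textnormal{\'et}}$ is equivalent, via the stalk at $\bar{x}$, to the category of finite continuous $\pi_1^{\textnormal{\'et}}(X, \bar{x})$-modules. On the topological side, Lemma \ref{deltareflects} identifies $\textnormal{Sh}(X_{\textnormal{cl}})$ with $\textnormal{Sh}(X(\C))$, and under this equivalence a locally constant sheaf with finite stalks on $X_{\textnormal{cl}}$ corresponds to the same notion on $X(\C)$. Since $X(\C)$ is a complex manifold, hence locally path connected and semi-locally simply connected, such sheaves are equivalent via the \'etal\'e-space construction to finite covering spaces of $X(\C)$, and hence by monodromy at $x$ to finite $\pi_1(X(\C), x)$-modules.

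The bridge between the two descriptions is the Riemann existence theorem (SGA 1, Exp.\ XII), which states that the analytification functor $X' \mapsto X'(\C)$ gives an equivalence between finite \'etale covers of $X$ and finite topological covering spaces of $X(\C)$. This produces a canonical isomorphism $\pi_1^{\textnormal{\'et}}(X, \bar{x}) \cong \widehat{\pi_1(X(\C), x)}$ (profinite completion). Because any continuous action of $\pi_1^{\textnormal{\'et}}(X, \bar{x})$ on a finite (hence discrete) module factors through an open, and therefore finite-index, normal subgroup, the action descends to a finite quotient of $\pi_1(X(\C), x)$; conversely, any finite $\pi_1(X(\C), x)$-module extends uniquely to a continuous module over the profinite completion. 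This gives the required equivalence between the two categories of finite modules.

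It remains to check that the functors $\varepsilon_{*}$ and $\varepsilon^{*}$ actually realize this equivalence rather than some twist of it. This is a stalk compatibility: for an \'etale map $X' \to X$ representing a locally constant constructible torsion sheaf $\mathcal{F}$, the pulled-back sheaf $\varepsilon^{-1}\mathcal{F}$ on $X_{\textnormal{cl}}$ corresponds to the topological covering $X'(\C) \to X(\C)$, whose stalk at $x$ agrees with the stalk of $\mathcal{F}$ at $\bar{x}$, and whose monodromy action factors through the natural map $\pi_1(X(\C), x) \to \pi_1^{\textnormal{\'et}}(X, \bar{x})$. The main obstacle is really the invocation of Riemann existence, which is the deep geometric input; everything else amounts to bookkeeping through fiber functors, the \'etal\'e-space construction, and the universal property of profinite completion.
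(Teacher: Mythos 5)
Your proposal is correct, and it is essentially the argument that the paper itself omits: the theorem is stated here without proof, deferred entirely to the citation of SGA IV, Exp.\ XII, \S 4, and the proof given there is precisely the reduction you describe --- identify locally constant constructible torsion sheaves on $X_{\textnormal{\'et}}$ with finite continuous $\pi_1^{\textnormal{\'et}}(X,\bar x)$-modules, identify locally constant finite-fiber sheaves on $X_{\textnormal{cl}}$ (via the topos equivalence of Lemma \ref{deltareflects}) with finite $\pi_1(X(\C),x)$-modules through the \'etal\'e-space/covering-space correspondence, and bridge the two by the Riemann existence theorem together with the universal property of profinite completion. Your closing stalk-and-monodromy compatibility check that $\varepsilon^{*}$ (rather than some twist) realizes the equivalence is the right final step, and the one hypothesis you implicitly use --- connectedness of $X(\C)$, so that a single fiber functor suffices --- is guaranteed by the paper's standing convention that all complex varieties are connected in the analytic topology.
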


The proof of the above theorem is what ultimately gives the desired quasi-isomorphism between sheaf cohomologies 
\begin{equation*}
\textnormal{R}\Gamma(X_{\textnormal{\'et}},\mathbb{F}_p) \xrightarrow{\simeq} \textnormal{R}\Gamma(X_{\textnormal{cl}},\mathbb{F}_p) \xleftarrow{\simeq} \textnormal{R}\Gamma(X(\C),\mathbb{F}_p)
\end{equation*}
where the first isomorphism is from $\varepsilon_{*}$ in the theorem above, and the second is by $\delta_{*}$ from Theorem \ref{zigzagsites}. The quasi-isomorphism is really the difficult part of Theorem \ref{maincomparison}; the rest of this section is to show that the above zig-zag of maps are maps of \Einf-\Fp-algebras. First, we show how we obtain induced maps on cohomology from morphisms of sites:

\begin{lemma}\label{derivedcomposition} Let $f: \mathcal{C} \rightarrow \mathcal{C}'$ be a morphism of sites. The following diagram commutes:

\begin{center}
\begin{tikzcd}
D\textnormal{AbSh}(\mathcal{C}) \arrow[r, "\textnormal{R}f_{*}"] \arrow[dr, "\textnormal{R}\Gamma" below left] & D\textnormal{AbSh}(\mathcal{C}') \arrow[d, "\textnormal{R}\Gamma"] \\
& D\textnormal{\textbf{Ab}}
\end{tikzcd}
\end{center}
In particular, we have \RG$(\mathcal{C'}, \textnormal{R}f_{*}\mathcal{F}) \simeq $ \RG$(\mathcal{C},\mathcal{F})$ for any $\mathcal{F}$ in $\textnormal{Sh}(\mathcal{C})$.  
 
\end{lemma}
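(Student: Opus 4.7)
The plan is to recognize the triangle as an instance of Grothendieck's theorem on the composition of right derived functors. The starting point is the underived identity $\Gamma_{\mathcal{C}} = \Gamma_{\mathcal{C}'} \circ f_{*}$ on $\textnormal{AbSh}(\mathcal{C})$. Writing global sections on any site $\mathcal{D}$ as $\Gamma(\mathcal{D},\mathcal{G}) = \textnormal{Hom}_{\textnormal{Sh}(\mathcal{D})}(*_{\mathcal{D}},\mathcal{G})$ for the terminal sheaf $*_{\mathcal{D}}$, and using that a morphism of sites equips $f$ with an adjoint pair $(f^{-1},f_{*})$ in which $f^{-1}$ is exact and hence preserves finite limits (so $f^{-1}*_{\mathcal{C}'} = *_{\mathcal{C}}$), the $(f^{-1},f_{*})$-adjunction yields
\[
\Gamma(\mathcal{C}', f_{*}\mathcal{F}) = \textnormal{Hom}(*_{\mathcal{C}'}, f_{*}\mathcal{F}) = \textnormal{Hom}(f^{-1}*_{\mathcal{C}'}, \mathcal{F}) = \textnormal{Hom}(*_{\mathcal{C}}, \mathcal{F}) = \Gamma(\mathcal{C}, \mathcal{F}).
\]

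Next I would verify the Grothendieck hypothesis: that $f_{*}$ sends injective abelian sheaves on $\mathcal{C}$ to $\Gamma_{\mathcal{C}'}$-acyclic objects. Since $f^{-1}$ is exact, its right adjoint $f_{*}$ preserves injectives, and injective abelian sheaves are acyclic for any left-exact additive functor. Grothendieck's composition theorem then produces a natural isomorphism $\textnormal{R}(\Gamma_{\mathcal{C}'} \circ f_{*}) \simeq \textnormal{R}\Gamma_{\mathcal{C}'} \circ \textnormal{R}f_{*}$ of functors $D^{+}\textnormal{AbSh}(\mathcal{C}) \to D\textnormal{\textbf{Ab}}$, which combined with the identity of the previous step gives the commutative triangle. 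The ``in particular'' clause then follows by applying the triangle to a sheaf $\mathcal{F}$ viewed as a complex concentrated in degree zero.

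The only delicate point in the argument is the underived identification $\Gamma_{\mathcal{C}} = \Gamma_{\mathcal{C}'} \circ f_{*}$, which hinges on fixing an intrinsic meaning of ``global sections'' that does not depend on an auxiliary choice of terminal object in $\mathcal{C}$ or $\mathcal{C}'$. The convention $\Gamma = \textnormal{Hom}(*,-)$ used above is manifestly topos-theoretic and renders this step formal. In the applications to the zig-zag of Theorem \ref{zigzagsites} no ambiguity arises: each of $X(\C)$, $X_{\textnormal{cl}}$, and $X_{\textnormal{\'et}}$ has the structure map to $X$ as terminal object, and it is preserved by both $\delta$ and $\varepsilon$, so the entire argument reduces to standard bookkeeping. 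No deeper obstacle is present.
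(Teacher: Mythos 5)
Your proposal is correct and follows essentially the same route as the paper: the underived identity $\Gamma_{\mathcal{C}} = \Gamma_{\mathcal{C}'}\circ f_{*}$ via the adjunction $(f^{-1},f_{*})$ and the fact that $f^{-1}$ preserves the terminal object, followed by preservation of injectives by $f_{*}$ and the composition theorem for right derived functors (the paper cites the Stacks Project version of exactly the Grothendieck composition result you invoke). Your closing remarks about the intrinsic topos-theoretic definition of $\Gamma$ and the terminal objects in the zig-zag are sensible but add nothing beyond the paper's argument.
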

\begin{proof}
This follows from the following commutative diagram:

\begin{center}
\begin{tikzcd}
\textnormal{AbSh}(\mathcal{C}) \arrow[r, "f_{*}"] \arrow[dr, "\Gamma" below left] & \textnormal{AbSh}(\mathcal{C}') \arrow[d, "\Gamma"] \\
& \textnormal{\textbf{Ab}}
\end{tikzcd}
\end{center}
We have that $f_{*}$ is right adjoint to an exact functor $f^{-1}$. Hence, we have the following string of natural isomorphisms:
\begin{equation*}
\begin{split}
 \Gamma(\mathcal{C}', f_{*}\mathcal{F}) &:=  \textnormal{Hom}_{\textnormal{PSh}(\mathcal{C}')}(*_{\mathcal{C}'}, f_{*}\mathcal{F}) \\
&\cong \textnormal{Hom}_{\textnormal{PSh}(\mathcal{C})}(f^{-1}(*_{\mathcal{C}'}), \mathcal{F}) \\ 
&\cong  \textnormal{Hom}_{\textnormal{PSh}(\mathcal{C})}(*_{\mathcal{C}},\mathcal{F}) \\
&=: \Gamma(\mathcal{C}, \mathcal{F})
\end{split}
\end{equation*}
where the second isomorphism holds since $f^{-1}$ commutes with finite limits, and the terminal object is a limit over an empty diagram. All functors in the above diagram are left exact but not right exact. Since $f_{*}$ preserves injective objects (\cite{stacks}, Tag 015Z), it follows by (\cite{stacks}, Tag 015L, Lemma 13.22.1) that the natural morphism $\textnormal{R}(\Gamma\circ f_{*}) \rightarrow \textnormal{R}\Gamma\circ \textnormal{R}f_{*}$ is an isomorphism. 
\end{proof}

Let us briefly recall the Godement construction \cite{godement}, considered in a general categorical context. Here, we use the language of Rodr\'iguez-Gon\'alez and Roig \cite{rodriguez2}. 

\begin{defn} Let $\mathcal{C}$ be a site. A \textbf{point of the site} $\mathcal{C}$ is a pair of adjoint functors $x = (x^{*},x_{*})$ 

\begin{equation*}
\textnormal{Sh}(\mathcal{C}) \overset{x^{*}}{\underset{x_{*}}\rightleftarrows}  \textnormal{Set},
\end{equation*}
\begin{equation*}
\textnormal{Hom}_{\textnormal{Set}}(x^{*}\mathcal{F},E) \cong \textnormal{Hom}_{\textnormal{Sh}(\mathcal{C})}(\mathcal{F},x_{*}E)
\end{equation*}
 
where $x^{*}$ commutes with finite limits. The site $\mathcal{C}$ \textbf{has enough points} if there exists a set $S$ of points $x$ such that a morphism $f$ in $\textnormal{Sh}(\mathcal{C})$ is an isomorphism if and only if $x^{*}f$ is a bijection for all $x \in S$. 
\end{defn} 
The right adjoint $x_{*}$ assigns to each set $E$ the \textit{skyscraper sheaf} $x_{*}E$ at the point $x$. The left adjoint $x^{*}$ assigns to each sheaf $\mathcal{F}$ the \textit{stalk} $\mathcal{F}_x := x^{*}\mathcal{F}$ at the point $x$.

\begin{example} Consider the site $X(\C)$ of (analytic) open subsets of the complex manifold $X(\C)$. Take $S$ to be the underlying set of $X(\C)$. The geometric points $x \in S$ determine points of the site by taking $x^{*}$ to be the stalk functor and $x_{*}$ to be the skyscraper sheaf functor. Moreover, a map of sheaves is an isomorphism if and only if it is so on the stalks. Thus the site $X(\C)$ has enough points.
\end{example}

\begin{example} It is a classical fact that the \etale site $X_{\textnormal{\'et}}$ has enough points; this follows from, for example, Deligne's criterion (\cite{SGA4}, VI Appendix, Proposition 9.0).
\end{example}

\begin{lemma}\label{Xclenoughpoints} The site $X_{\textnormal{cl}}$ has enough points.
\end{lemma}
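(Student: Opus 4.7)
The plan is to transport the (well-known) points of the site $X(\C)$ to $X_{\textnormal{cl}}$ using the equivalence of topoi $\delta_*$ established in Lemma \ref{deltareflects}. Since ordinary topological stalks give enough points on $X(\C)$ (as noted in the preceding example), and since an equivalence of categories reflects isomorphisms and preserves all adjunctions, the argument is essentially formal once that equivalence is in hand.

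Concretely, for each $x \in X(\C)$, let $(x^*, x_*)$ denote the usual stalk/skyscraper adjunction on $\textnormal{Sh}(X(\C))$, and let $g_*\colon \textnormal{Sh}(X(\C)) \to \textnormal{Sh}(X_{\textnormal{cl}})$ be the quasi-inverse to $\delta_* = g^{-1}$ appearing in the proof of Lemma \ref{deltareflects}; in particular $g_*$ is right adjoint to $\delta_*$. Define a candidate point $y_x = (y^*, y_*)$ of $X_{\textnormal{cl}}$ by
\begin{equation*}
y^* \;:=\; x^* \circ \delta_*, \qquad y_* \;:=\; g_* \circ x_*.
\end{equation*}
Both $x^*$ and $\delta_*$ preserve finite limits (the latter because it is an equivalence), so $y^*$ preserves finite limits as well. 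The required adjunction $\textnormal{Hom}_{\textnormal{Set}}(y^*\mathcal{F},E) \cong \textnormal{Hom}_{\textnormal{Sh}(X_{\textnormal{cl}})}(\mathcal{F}, y_* E)$ is then obtained by chaining the adjunctions $(\delta_*, g_*)$ and $(x^*, x_*)$.

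To see that $S := \{y_x : x \in X(\C)\}$ is a detecting set, suppose $f$ is a morphism of sheaves on $X_{\textnormal{cl}}$ such that $y^* f$ is a bijection for every $x \in X(\C)$. Then $x^*(\delta_* f)$ is a bijection for each $x$, so $\delta_* f$ is an isomorphism because $X(\C)$ has enough points; since $\delta_*$ is an equivalence of categories, it reflects isomorphisms, and therefore $f$ itself is an isomorphism. The main technical input is already supplied by the equivalence of topoi in Lemma \ref{deltareflects}, so there is no substantive obstacle beyond that lemma; the only point that might require comment is that the collection $\{y_x\}$ is indexed by a genuine set, which is immediate since it is indexed by the underlying point-set of $X(\C)$.
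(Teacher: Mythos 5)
Your proposal is correct and follows essentially the same route as the paper: both transport the stalk/skyscraper points of $X(\C)$ to $X_{\textnormal{cl}}$ via $y^* := x^*\circ\delta_*$ and $y_* := g_*\circ x_*$, verify the adjunction by composing the two adjunctions, and detect isomorphisms by combining the fact that $X(\C)$ has enough points with the fact that $\delta_*$ reflects isomorphisms (the paper cites Lemma \ref{deltareflects} for the latter, while you invoke the equivalence of topoi directly, which amounts to the same thing).
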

\begin{proof} Recall we have an equivalence of topoi, with the equivalence induced by an inclusion of categories, with induced functor $\delta_{*}: \textnormal{Sh}(X_{\textnormal{cl}}) \rightarrow \textnormal{Sh}(X(\C))$ with quasi-inverse $g_{*}$. In fact, $\delta_{*}$ is left adjoint to $g_{*}$. By the example above, we know $X(\C)$ has enough points. For each point $x = (x^{*},x_{*})$ of $X(\C)$, define $y = (y^{*},y_{*}) := (x^{*}\circ\delta_{*}, g_{*}\circ x_{*})$. We check adjointness:
\begin{equation*}
\begin{split}
\textnormal{Hom}_{\textnormal{Set}}(y^{*}\mathcal{F},E) &= \textnormal{Hom}_{\textnormal{Set}}(x^{*}\circ\delta_{*}\mathcal{F},E) \\
& \cong \textnormal{Hom}_{\textnormal{Sh}(X(\C))}(\delta_{*}\mathcal{F},x_{*}E) \\
& \cong \textnormal{Hom}_{\textnormal{Sh}(X_{\textnormal{cl}})}(\mathcal{F}, g_{*}\circ x_{*}E) \\
& = \textnormal{Hom}_{\textnormal{Sh}(X_{\textnormal{cl}})}(\mathcal{F}, y_{*}E)
\end{split}
\end{equation*}
where all isomorphisms follow from the adjunction pairs $(x^{*},x_{*})$ and $(\delta_{*},g_{*})$. Moreover, since $x^{*}$ and $\delta_{*}$ are both exact, their composition $y^{*}$ also commutes with finite limits.
\\ \\
The above argument shows that $y$ as defined is a point. We take as our set of points $S$ a set with cardinality at most the cardinality of the underlying set of the manifold $X(\C)$, via $(x^{*},x_{*}) \mapsto (x^{*}\circ\delta_{*},g_{*}\circ x_{*})$. We now show we can check isomorphisms of sheaves stalk-wise.
\\ \\
Assume $f$ is an isomorphism in $\textnormal{Sh}(X_{\textnormal{cl}})$. By exactness of $\delta_{*}$, we have that $\delta_{*}f$ is an isomorphism in $\textnormal{Sh}(X(\C))$. Thus, $x^{*}\delta_{*}f$ is a bijection for all points $x$. Therefore, $y^{*}f = x^{*}\delta_{*}f$ is a bijection for all $y$.
\\ \\
Conversely, suppose $y^{*}f$ is a bijection for all $y$, i.e., that $x^{*}\delta_{*}f$ is a bijection for all $x$. Then, $\delta_{*}f$ is an isomorphism in $\textnormal{Sh}(X(\C))$. However, by lemma \ref{deltareflects}, $\delta_{*}$ reflects injections and surjections. Thus $f$ is an isomorphism. 
\end{proof}

We are now ready to discuss the Godement resolution, following \cite{rodriguez2}. In fact, we have the following proposition (\cite{rodriguez2}, Proposition 3.3.1):

\begin{prop} Let $D$ be a category closed under products and filtered colimits, and let $x$ be a point of the site $\mathcal{C}$. Then there is an adjoint pair $(x^{*},x_{*})$ of functors
\begin{equation*}
\textnormal{Sh}(\mathcal{C},D) \overset{x^{*}}{\underset{x_{*}}\rightleftarrows}  D,
\end{equation*}
\begin{equation*}
\textnormal{Hom}_{D}(x^{*}\mathcal{F},E) \cong \textnormal{Hom}_{\textnormal{Sh}(\mathcal{C},D)}(\mathcal{F},x_{*}E)
\end{equation*}
\end{prop}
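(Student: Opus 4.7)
The plan is to extend the set-valued point structure to $D$-valued sheaves by reusing the description of stalks as filtered colimits over neighborhoods of $x$. First, I would recall that the stalk $x^*\mathcal{F}$ at a point $x$ of $\mathcal{C}$ may be computed as a filtered colimit over the cofiltered category of neighborhoods of $x$ (see SGA 4, IV.6): there is a filtered category $I$ and a diagram $i \mapsto U_i$ in $\mathcal{C}$ such that for every sheaf of sets $\mathcal{F}$ on $\mathcal{C}$,
\begin{equation*}
x^*\mathcal{F} = \operatorname{colim}_{i \in I} \mathcal{F}(U_i).
\end{equation*}
I would then \emph{define} $x^{*}: \textnormal{Sh}(\mathcal{C}, D) \to D$ by the same formula, with the colimit now taken in $D$; this is well-defined by the hypothesis that $D$ has filtered colimits, and functoriality in $\mathcal{F}$ is immediate.

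For the right adjoint, I would set
\begin{equation*}
x_{*}E(U) \;:=\; \prod_{\sigma \in x^{*}h_{U}^{\#}} E,
\end{equation*}
where $h_{U}^{\#}$ is the sheafification of the representable presheaf associated to $U$ and the product is taken in $D$, which exists by hypothesis. Contravariant functoriality in $U$ is given by reindexing along the set map $x^{*}(h_{V}^{\#} \to h_{U}^{\#})$ induced by any $V \to U$ in $\mathcal{C}$. To check that $x_{*}E$ is a $D$-valued sheaf, I would use that for any cover $\{U_{j} \to U\}$ the canonical diagram $\coprod_{j,k} h_{U_{j} \times_{U} U_{k}}^{\#} \rightrightarrows \coprod_{j} h_{U_{j}}^{\#} \to h_{U}^{\#}$ is a coequalizer in $\textnormal{Sh}(\mathcal{C})$, that the set-valued $x^{*}$ preserves it as a left adjoint, and that the contravariant functor $S \mapsto \prod_{S} E$ from $\textnormal{Set}$ to $D$ converts coequalizers into equalizers; this yields precisely the $D$-valued sheaf condition for $x_{*}E$.

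Finally, to establish the adjunction I would combine Yoneda, the universal property of the product in $D$, and that of the filtered colimit. A morphism $\mathcal{F} \to x_{*}E$ in $\textnormal{Sh}(\mathcal{C}, D)$ is, by the product description of $x_{*}E(U)$, equivalent to a collection of maps $\mathcal{F}(U) \to E$ in $D$ indexed by $\sigma \in x^{*}h_{U}^{\#}$ and natural in $U$; the naturality with respect to the transition maps of the diagram $\{U_{i}\}$ translates this data into a single map $\operatorname{colim}_{i} \mathcal{F}(U_{i}) = x^{*}\mathcal{F} \to E$ in $D$ via the universal property of the filtered colimit. I expect the main obstacle to be the sheaf property of $x_{*}E$, since one must carefully transport the coequalizer/equalizer correspondence between the set-valued sheaf topos and the $D$-enriched setting; once this is in place, functoriality and the adjunction follow formally from the hypotheses on $D$.
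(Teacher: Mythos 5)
Your proposal is correct and takes essentially the same route as the paper, which does not prove the proposition itself but defers to Rodr\'iguez-Gonz\'alez--Roig (\cite{rodriguez2}, 3.3) and describes their proof precisely as giving explicit formulas for $x^{*}$ via filtered colimits over neighborhoods of $x$ and for $x_{*}$ via products. The details you supply --- the sheaf condition for $x_{*}E$ via the \v{C}ech coequalizer of sheafified representables and the adjunction via the universal properties of the product and the filtered colimit --- are exactly the content of that citation.
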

Rodr\'iguez-Gon\'alez and Roig prove the above proposition by noting that we have explicit formulas for $x^{*}$, using filtered colimits, and $x_{*}$, using products. We refer the formulas to their paper (\cite{rodriguez2}, 3.3).

\begin{defn}\label{godementpoints} Let $\mathcal{C}$ be a site with enough points, with the set $S$ as its set of points. Let $D$ be a category closed under products and filtered colimits. By abuse of notation, let $S$ denote the set considered as a discrete category, with only identity morphisms. There is a pair $(p^{*},p_{*})$ of adjoint functors

\begin{equation*}
\textnormal{Sh}(\mathcal{C},D) \overset{p^{*}}{\underset{p_{*}}\rightleftarrows}  D^{S},
\end{equation*}
\begin{equation*}
\textnormal{Hom}_{D^{S}}(p^{*}\mathcal{F},E) \cong \textnormal{Hom}_{\textnormal{Sh}(\mathcal{C},D)}(\mathcal{F},p_{*}E)
\end{equation*}
defined, for $\mathcal{F} \in \textnormal{Sh}(\mathcal{C},D)$ and $E = (E_x)_{x \in S} \in D^S$, by $p^{*}\mathcal{F} := (\mathcal{F}_x)_{x \in S}$ and $p_{*}E := \prod_{x \in S} x_{*}(E_x)$.
\end{defn}

\begin{defn} Let $\mathcal{C}$ be a site with enough points and $D$ a symmetric monoidal category closed under products and filtered colimits. The \textbf{Godement functor} is a functor $G: \textnormal{Sh}(\mathcal{C},D) \rightarrow \textnormal{Sh}(\mathcal{C},D)$ defined as $G = p_{*}p^{*}$ for the adjoint pair $(p^{*},p_{*})$ defined in definition \ref{godementpoints}. The \textbf{cosimplicial Godement functor} is a functor $G^{\bullet}: \textnormal{Sh}(\mathcal{C},D) \rightarrow \Delta\textnormal{Sh}(\mathcal{C},D)$ into cosimplicial sheaves, where the $i$-th term is given by the $i$-th iterate $G^{i}$.
\end{defn}

For the purposes of this paper, we will mainly be concerned with $D = \textnormal{Ch}(k$-$\textnormal{mod})$ the category of cochain complexes of $k$-modules. In this situation, we have the following classical theorem, proven for far more general $D$ by Rodr\'iguez-Gon\'alez and Roig (see \cite{rodriguez} Proposition 3.4.5, Corollary 3.4.6, Proposition 3.4.7) though definitely known to Godement \cite{godement} albeit without modern language. See also Chataur and Cirici (\cite{chataur} Definition 2.5).

\begin{thm}\label{godementismonoidal} Let $f: \mathcal{C} \rightarrow \mathcal{C}'$ be a morphism of sites induced by a continuous functor, where $\mathcal{C}$ and $\mathcal{C}'$ have enough points. Let $D = \textnormal{Ch}(k$-$\textnormal{mod})$. Then the functors $G^{\bullet}$, $f_{*}$, and $\Gamma$ are lax symmetric monoidal. 
\end{thm}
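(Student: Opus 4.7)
My plan is to treat the three functors separately, with the symmetric monoidal structure on $\textnormal{Sh}(\mathcal{C}, D)$ given by the sheafification of the objectwise tensor product of chain complexes, and on cosimplicial sheaves $\Delta\textnormal{Sh}(\mathcal{C}, D)$ by the degreewise (diagonal) tensor product. For $\Gamma$ and $f_*$ the lax monoidal structure is essentially formal, so the substantive content of the theorem is in handling the cosimplicial Godement functor $G^\bullet$.

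First I would dispose of $\Gamma$ and $f_*$. For $\Gamma$, the natural map $\Gamma\mathcal{F} \otimes \Gamma\mathcal{G} \to \Gamma(\mathcal{F} \otimes \mathcal{G})$ comes from the universal map of the presheaf tensor product into its sheafification, with associativity, unitality, and symmetry inherited from the underlying monoidal structure on $D$. For $f_*$, I would argue objectwise on $\mathcal{C}'$: on $U \in \mathcal{C}'$, we have $f_*\mathcal{F}(U) \otimes f_*\mathcal{G}(U) = \mathcal{F}(f^{-1}U) \otimes \mathcal{G}(f^{-1}U)$, which maps into $(\mathcal{F} \otimes \mathcal{G})(f^{-1}U) = f_*(\mathcal{F} \otimes \mathcal{G})(U)$ via the presheaf-to-sheafification comparison; this is natural in $U$ and sheafifies.

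For $G = p_* p^*$, I would factor the lax monoidal structure through $D^S$. The functor $p^*: \textnormal{Sh}(\mathcal{C}, D) \to D^S$ is strong symmetric monoidal: each stalk is a filtered colimit, and filtered colimits commute with finite tensor products in $\textnormal{Ch}(k\textnormal{-mod})$ (since $\otimes$ is a left adjoint in each variable), so $(\mathcal{F} \otimes \mathcal{G})_x \cong \mathcal{F}_x \otimes \mathcal{G}_x$. Its right adjoint $p_*: D^S \to \textnormal{Sh}(\mathcal{C}, D)$ is lax symmetric monoidal via
\begin{equation*}
p_* E \otimes p_* F = \Bigl(\prod_x x_* E_x\Bigr) \otimes \Bigl(\prod_x x_* F_x\Bigr) \to \prod_x \bigl(x_* E_x \otimes x_* F_x\bigr) \to \prod_x x_*(E_x \otimes F_x) = p_*(E \otimes F),
\end{equation*}
using the universal property of products in the first map and lax monoidality of each individual $x_*$ in the second (the latter being dual to the strong monoidality of $x^*$, by the same stalk-colimit argument). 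Composing gives $G = p_* p^*$ as a lax symmetric monoidal functor.

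The main obstacle is promoting this from $G$ to the cosimplicial functor $G^\bullet$, whose coface and codegeneracy maps are built from the unit $\eta: \textnormal{id} \to p_* p^*$ and the counit $\varepsilon: p^* p_* \to \textnormal{id}$ of the adjunction $(p^*, p_*)$. To conclude that the lax structure maps $G^i \mathcal{F} \otimes G^i \mathcal{G} \to G^i(\mathcal{F} \otimes \mathcal{G})$ assemble into a natural transformation of cosimplicial sheaves, I need $\eta$ and $\varepsilon$ to be monoidal natural transformations between the appropriate (strong or lax) monoidal functors. This follows formally from Kelly's doctrinal adjunction: an adjunction whose left adjoint is strong monoidal automatically upgrades the right adjoint to a lax monoidal functor with monoidal unit and counit. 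With $\eta$ and $\varepsilon$ monoidal, every coface and codegeneracy commutes with the lax constraints, and symmetry descends from the symmetry of $D$, completing the proof.
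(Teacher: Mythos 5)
Your proposal is correct and is essentially the argument the paper is relying on: the paper gives no proof of this theorem, instead citing Rodr\'iguez-Gonz\'alez--Roig \cite{rodriguez} (Propositions 3.4.5--3.4.7), and their proof is precisely your factorization $G = p_{*}p^{*}$ with $p^{*}$ strong symmetric monoidal via the filtered-colimit stalk formula, $p_{*}$ lax symmetric monoidal via products of skyscrapers, and doctrinal adjunction making the unit and counit monoidal so that the coface and codegeneracy maps of $G^{\bullet}$ respect the lax constraints. The formal treatment of $\Gamma$ and $f_{*}$ via the presheaf-to-sheafification comparison likewise matches the cited source, so nothing further is needed.
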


\textbf{Remark.} The symmetric monoidal structure on $\textnormal{Sh}(\mathcal{C},D)$ is by sheafifying the presheaf whose values are tensor products object-wise. That is, given $\mathcal{F}$ and $\mathcal{G}$, we can define a presheaf by $U \mapsto \mathcal{F}(U) \otimes_{k} \mathcal{G}(U)$, and then sheafify. 
\\ \\
We now use some definitions following Chataur and Cirici (\cite{chataur}, Definition 2.4), and Mandell (\cite{mandellCochains}, Proposition 5.2). See also (\cite{stacks}, Tag 019H) for general Dold-Kan considerations.

\begin{defn} The \textbf{normalized complex functor} is the functor $N: \Delta\textnormal{Ch}(k$-$\textnormal{mod}) \rightarrow \textnormal{Ch}(k$-$\textnormal{mod})$ given as the composition of the cosimplicial degree-wise normalization functor and the totalization functor. The \textbf{associated complex functor} is the functor $s: \Delta\textnormal{Ch}(k$-$\textnormal{mod}) \rightarrow \textnormal{Ch}(k$-$\textnormal{mod})$ given by taking alternating sums of the cosimplicial maps to obtain an associated double complex, then applying totalization.
\end{defn}

It is well known that for every cosimplicial complex $C^{\bullet}$, the complexes $NC^{\bullet}$ and $sC^{\bullet}$ are homotopy equivalent (\cite{weibel}, Lemma 8.3.7, Theorem 8.3.8).

\begin{defn} Let $D = \textnormal{Ch}(k$-$\textnormal{mod})$. The \textbf{normalized Godement resolution} is the functor $NG^{\bullet}: \textnormal{Sh}(\mathcal{C},D) \rightarrow \textnormal{Sh}(\mathcal{C},D)$ given as object-wise composition of the normalized complex functor and the cosimplicial Godement functor.
\end{defn}

\begin{defn}
Let $D = \textnormal{Ch}(k$-$\textnormal{mod})$. The \textbf{Godement resolution} is the functor $sG^{\bullet}: \textnormal{Sh}(\mathcal{C},D) \rightarrow \textnormal{Sh}(\mathcal{C},D)$ given as object-wise composition of the associated complex functor and the cosimplicial Godement functor.
\end{defn}

The normalized cochain functor is monoidal but \textit{not} symmetric monoidal; instead we have the following theorem of Hinich and Schechtman \cite{hinich} (see also Chataur and Cirici \cite{chataur}, Proposition 2.2, and Mandell \cite{mandellCochains}, Theorem 5.5).

\begin{thm}\label{hinichschechtman} (Hinich-Schechtman) Let $A^{\bullet}$ be a cosimplicial $\mathcal{P}$-algebra for an arbitrary operad $\mathcal{P}$ in $\textnormal{Ch}(k$-$\textnormal{mod})$. Then $NA^{\bullet}$ is a $(\mathcal{P}\otimes_k \mathcal{Z})$-algebra, for the Eilenberg-Zilbur operad $\mathcal{Z}$, functorial in $A^{\bullet}$ and $\mathcal{P}$.
\end{thm}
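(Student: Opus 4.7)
The plan is to unpack the hypothesis into explicit operadic structure maps and then assemble the desired action on $NA^{\bullet}$ from two independent pieces: a $\mathcal{P}$-factor coming from the given cosimplicial algebra structure, and a $\mathcal{Z}$-factor encoding the failure of the normalization functor $N$ to be strictly symmetric monoidal. First I would rewrite a cosimplicial $\mathcal{P}$-algebra structure on $A^{\bullet}$ as a family of $\Sigma_n$-equivariant maps
\[
\mu_n : \mathcal{P}(n) \otimes_k (A^{\bullet})^{\otimes n} \longrightarrow A^{\bullet}
\]
in $\Delta\textnormal{Ch}(k\textnormal{-mod})$, where the tensor product is degreewise and $\mathcal{P}(n)$ is regarded as a constant cosimplicial complex; these maps are required to be compatible with operadic composition in $\mathcal{P}$ and with permutations of inputs. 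The goal is then to produce $\Sigma_n$-equivariant chain maps $\widetilde{\mu}_n : \mathcal{P}(n) \otimes_k \mathcal{Z}(n) \otimes_k (NA^{\bullet})^{\otimes n} \to NA^{\bullet}$ satisfying the analogous compatibility, functorially in $A^{\bullet}$ and $\mathcal{P}$.

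Two ingredients are needed. On the $\mathcal{P}$-side, I would use that for any chain complex $P$ viewed as a constant cosimplicial complex $P^{\bullet}$ and any $C^{\bullet}$, there is a canonical isomorphism $N(P^{\bullet} \otimes C^{\bullet}) \cong P \otimes NC^{\bullet}$, because the codegeneracies of a constant cosimplicial object are identities, so the quotient defining $N$ factors through the $C$-side. Applying $N$ to $\mu_n$ then yields
\[
N(\mu_n) : \mathcal{P}(n) \otimes N\bigl((A^{\bullet})^{\otimes n}\bigr) \longrightarrow NA^{\bullet}.
\]
On the $\mathcal{Z}$-side, I would invoke the defining property of the Eilenberg-Zilber operad: for each $n$ and each cosimplicial complex $C^{\bullet}$ there is a natural $\Sigma_n$-equivariant chain map
\[
\zeta_n : \mathcal{Z}(n) \otimes (NC^{\bullet})^{\otimes n} \longrightarrow N\bigl((C^{\bullet})^{\otimes n}\bigr),
\]
refining the classical shuffle (Eilenberg-MacLane) map and compatible with operadic composition in $\mathcal{Z}$. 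The desired structure map is then the composite
\[
\widetilde{\mu}_n \;:=\; N(\mu_n) \circ \bigl(\mathrm{id}_{\mathcal{P}(n)} \otimes \zeta_n\bigr).
\]

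The remaining checks are formal. Equivariance under $\Sigma_n$ follows from the equivariance of both ingredients. Compatibility with the operadic composition on $\mathcal{P} \otimes_k \mathcal{Z}$ splits cleanly into the hypothesis on the $\mu_n$ and the defining property of the $\zeta_n$, because composition on a tensor product of operads acts factor-wise. Naturality in $A^{\bullet}$ is clear since every arrow in the composite is natural; naturality in $\mathcal{P}$ holds because $\mathcal{P}(n)$ appears only through the untouched tensor factor.

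The hard part will be ingredient two: exhibiting $\mathcal{Z}$ together with the family $\zeta_{\bullet}$ strictly compatible with operadic composition. This is the combinatorial core of the Hinich-Schechtman theorem and is really the content being proved, since once $\mathcal{Z}$ and $\zeta_{\bullet}$ are in hand the rest reduces to a diagram chase. The conceptually cleanest route is to define $\mathcal{Z}$ as (a cofibrant replacement of) the endomorphism operad of $N$ restricted to free cosimplicial complexes $k\Delta[-]$, so that operadic associativity is automatic, and then identify the resulting action with an explicit shuffle-polynomial model such as the one in the original Hinich-Schechtman paper. With $\mathcal{Z}$ and $\zeta_{\bullet}$ so constructed, the proof is complete.
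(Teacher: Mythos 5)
The paper does not actually prove this statement --- it is quoted from the literature (\cite{hinich}; see also \cite{mandellCochains}, Theorem 5.5, and \cite{chataur}, Proposition 2.2) --- and your outline reproduces the standard argument from those sources: define $\mathcal{Z}(n)$ as the complex of natural transformations $NC_1^{\bullet}\otimes\cdots\otimes NC_n^{\bullet}\to N(C_1^{\bullet}\otimes\cdots\otimes C_n^{\bullet})$ so that operadic composition is automatic, and obtain the $(\mathcal{P}\otimes_k\mathcal{Z})$-action as $N(\mu_n)\circ(\mathrm{id}\otimes\zeta_n)$. Two small remarks: you only need the natural lax-monoidality map $\mathcal{P}(n)\otimes NC^{\bullet}\to N(\mathcal{P}(n)\otimes C^{\bullet})$, not the isomorphism you assert (which can fail for infinite-dimensional $\mathcal{P}(n)$ because totalization involves products); and you should not pass to a cofibrant replacement of the endomorphism operad, since that would destroy the strict universal property defining $\zeta_n$ --- $\mathcal{Z}$ is taken as is, and the paper instead fixes a map $\mathcal{E}\to\mathcal{Z}$ from an $E_{\infty}$ operad precisely because $\mathcal{Z}$ itself need not have free $\Sigma_n$-actions.
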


We introduce two necessary definitions from \cite{rodriguez} and \cite{rodriguez2} before proceeding. By the remark on the symmetric monoidal structure on $\textnormal{Sh}(\mathcal{C},D)$ via the sheafification functor, the following is well defined. See (\cite{rodriguez}, Remark 3.4.3).

\begin{defn} Let $D = \textnormal{Ch}(k$-$\textnormal{mod})$. A \textbf{sheaf of operads} $\mathcal{P}$ in $D$ is an operad in $\textnormal{Sh}(\mathcal{C},D)$. 
\end{defn}

\begin{defn}\label{sheafOfalgebras} Let $D = \textnormal{Ch}(k$-$\textnormal{mod})$ and let $\mathcal{P}$ be a sheaf of operads in $D$ on a site $\mathcal{C}$. A sheaf of cochain complexes $\mathcal{F} \in \textnormal{Sh}(\mathcal{C},D)$ is a \textbf{sheaf of} $\mathcal{P}$\textbf{-algebras} if it is a $\mathcal{P}$-algebra. Equivalently, for each object $U$ in $\mathcal{C}$, there are the usual structure morphisms for an algebra over an operad $\mathcal{P}(n)(U) \otimes_k \mathcal{F}^{\otimes n}(U) \rightarrow \mathcal{F}(U)$.
\end{defn}

\begin{example} Let $\mathcal{O}$ be a sheaf of $k$-algebras. We can view this as a sheaf of trivial commutative DGAs, which are concentrated in degree 0. Then $\mathcal{O}$ is a sheaf of $Comm$-algebras, where $Comm$ denotes the commutative operad, which has $Comm(n) = k$ in degree 0 for all $n$, with trivial symmetric group actions. Note that $Comm$-algebras are equivalent to commutative DGAs (CDGAs) (\cite{krizMay} Example 2.2). 
\end{example}

\begin{example} Recall that the Eilenberg-Zilbur operad is acyclic (\cite{hinich} Theorem 2.3, \cite{mandellCochains} Proposition 5.4), and so admits a map of operads $\mathcal{Z} \rightarrow Comm$. There is also the fixed map of operads $\mathcal{E} \rightarrow \mathcal{Z}$, where $\mathcal{E}$ is our fixed \Einf-operad. Thus a sheaf $\mathcal{O}$ of $k$-algebras is a sheaf of \Einf-algebras.
\end{example}
\textbf{Remark.} Recall that the symmetric monoidal structure on $\Delta\textnormal{Sh}(\mathcal{C},D)$ is induced degree-wise by the symmetric monoidal structure on $\textnormal{Sh}(\mathcal{C},D)$, so the definitions above make sense for cosimplicial sheaves as well.
\\ \\
\textbf{Remark.} The equivalence above in definition \ref{sheafOfalgebras} follows from the existence of the sheafification functor for presheaves with values in $D = \textnormal{Ch}(k$-$\textnormal{mod})$. Part of the work of Rodr\'iguez-Gon\'alez and Roig in \cite{rodriguez} concerns the conditions on the coefficient category $D$ for which a sheafification functor is available. For the purposes of this paper, we will actually mainly be concerned with the underlying presheaf structure than the sheaf structure per se.

\begin{lemma}\label{CDGAcolimits} The category of $Comm$-algebras, equivalent to the category of commutative differential graded $k$-algebras (CDGAs), is closed under limits and colimits.
\end{lemma}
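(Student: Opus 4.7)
The plan is to verify completeness and cocompleteness separately by exhibiting the constructions concretely. For limits, I would argue that the forgetful functor from $Comm$-algebras to $\textnormal{Ch}(k\textnormal{-mod})$ creates limits: if $\{A_i\}$ is a diagram of CDGAs, form its limit $L$ as a cochain complex (this is the object-wise limit in $k$-modules, equipped with the induced differential). The multiplication maps $A_i \otimes_k A_i \to A_i$ assemble, via the universal property of $L$ (together with the map $L \otimes_k L \to \lim(A_i \otimes_k A_i)$ coming from the projections), into a commutative, associative, unital multiplication on $L$ compatible with the differential. Since products and equalizers suffice and both work this way, all small limits exist.

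For colimits, the key observation is that in the category of CDGAs the coproduct of $A$ and $B$ is the tensor product $A \otimes_k B$, with multiplication $(a \otimes b)(a' \otimes b') = (-1)^{|b||a'|} aa' \otimes bb'$; the structure maps $A, B \to A \otimes_k B$ sending $a \mapsto a \otimes 1$ and $b \mapsto 1 \otimes b$ are easily seen to satisfy the universal property in CDGAs. For coequalizers of a parallel pair $f, g : A \rightrightarrows B$, I would form the CDGA $B / I$, where $I$ is the differential graded ideal generated by $\{f(a) - g(a) : a \in A\}$; one checks directly that $I$ is closed under $d$ and under multiplication by elements of $B$, and that $B/I$ satisfies the coequalizer universal property. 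Since coproducts and coequalizers generate all small colimits, this gives cocompleteness. Filtered colimits, in particular, are created by the forgetful functor since $- \otimes_k -$ commutes with filtered colimits.

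The most delicate step is the construction of coequalizers, which requires verifying that the differential graded ideal $I$ is well-behaved (closed under $d$, and that $B/I$ inherits a CDGA structure); the rest reduces to bookkeeping. Alternatively, one could short-circuit the whole argument by invoking the Beck monadicity theorem: the free-forgetful adjunction between CDGAs and $\textnormal{Ch}(k\textnormal{-mod})$ (with the free functor given by the symmetric algebra on a cochain complex) is monadic, so CDGAs inherit all limits from chain complexes and, since the underlying category is cocomplete and the free functor preserves reflexive coequalizers, CDGAs are cocomplete as well. Either route suffices.
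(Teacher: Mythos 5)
Your proposal is correct, and for the colimit half it takes a genuinely different (and more robust) route than the paper. The paper's sketch computes the colimit on the underlying cochain complexes and then transports the multiplication through the isomorphism $\operatorname{colim} C_i^{\bullet} \otimes_k \operatorname{colim} C_j^{\bullet} \cong \operatorname{colim}(C_i^{\bullet}\otimes_k C_i^{\bullet})$; that isomorphism, and the case split on $i \leq j$ versus $i > j$ used to invert it, only make sense for filtered (directed) diagrams, which is all the paper actually needs downstream (the Godement functor $G^{0} = p_{*}p^{*}$ is a product of filtered colimits). Your construction instead builds arbitrary small colimits from coproducts ($A \otimes_k B$ with the Koszul-signed multiplication) and coequalizers ($B/I$ for the dg ideal generated by the differences $f(a)-g(a)$), which correctly handles the non-filtered case, where the underlying complex of the colimit is \emph{not} the colimit of underlying complexes; you then recover the paper's observation as the special case that the forgetful functor creates filtered colimits. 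For limits your argument (creation by the forgetful functor) agrees in substance with the paper's, and is in fact the cleaner statement, since the paper's claim that the limit case is ``identical'' to its colimit argument glosses over the asymmetry. Your monadicity alternative is also fine: the symmetric algebra monad is finitary and preserves reflexive coequalizers, so cocompleteness follows from Linton's theorem. Either of your routes proves the full statement of the lemma; the paper's sketch really only establishes the part of it that gets used.
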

\begin{proof} Limits and colimits of cochain complexes are computed degree-wise on the underlying $k$-modules, with the differential defined by universal property. A CDGA is precisely a cochain complex $C^{\bullet}$ with a chain map $a: C^{\bullet} \otimes_k C^{\bullet} \rightarrow C^{\bullet}$ unital and associative in the appropriate sense, and satisfying graded commutativity. We sketch a proof of the colimit case in this paragraph and omit the limit case, which is identical. For colimits in CDGAs, we take the colimits of the underlying cochain complexes. Let $D^{\bullet}$ denote the colimit of a system of CDGAs $C^{\bullet}_i \xrightarrow{f_{ji}} C^{\bullet}_j$. We define the algebra map by
\begin{equation*}
D^{\bullet} \otimes_k D^{\bullet} = \textnormal{colim} C^{\bullet}_i \otimes_k \textnormal{colim} C^{\bullet}_j \cong \textnormal{colim} (C^{\bullet}_i \otimes_k C^{\bullet}_i) \xrightarrow{\textnormal{colim } a_i} \textnormal{colim} C^{\bullet}_i = D^{\bullet}
\end{equation*}
where the center natural isomorphism follows from invoking the universal property on $\textnormal{colim } (C^{\bullet}_i \otimes_k C^{\bullet}_i)$ for the system $\{C^{\bullet}_i \otimes_k C^{\bullet}_i\}$, via the maps $C^{\bullet}_i \otimes_k C^{\bullet}_i \rightarrow \textnormal{colim} C^{\bullet}_i \otimes_k \textnormal{colim} C^{\bullet}_j$. For the inverse, one fixes an index $i$ and defines maps $C^{\bullet}_i \otimes_k C^{\bullet}_j \xrightarrow{f_{ji} \otimes \textnormal{id}} C^{\bullet}_j \otimes_k C^{\bullet}_j \rightarrow \textnormal{colim } C^{\bullet}_i \otimes C^{\bullet}_i$ if $i \leq j$, and $C^{\bullet}_i \otimes_k C^{\bullet}_j \xrightarrow{\textnormal{id} \otimes f_{ij}} C^{\bullet}_i \otimes_k C^{\bullet}_i \rightarrow \textnormal{colim } C^{\bullet}_i \otimes C^{\bullet}_i$ if $i > j$; one then uses the universal property twice, and commutativity of all diagrams involved and uniqueness of the maps arising from the universal property verify the isomorphism. Graded commutativity and the Leibniz formula follow from all the terms in the equations being colimits of elements and colimits of maps respectively, and functoriality of colim.

\end{proof}

\begin{lemma}\label{Commlemma} Let $\mathcal{F}$ be a sheaf of $Comm$-algebras and let $f: \mathcal{C} \rightarrow \mathcal{C}'$ be a morphism of sites with enough points induced by a continuous functor $u: \mathcal{C}' \rightarrow \mathcal{C}$. Then $G^{\bullet}\mathcal{F}$ is a cosimplicial presheaf of $Comm$-algebras. If $\mathcal{F}$ is a sheaf of $\mathcal{P}$-algebras for a fixed operad $\mathcal{P}$, then $f_{*}\mathcal{F}$ is a presheaf of $\mathcal{P}$-algebras.
\end{lemma}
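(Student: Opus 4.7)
The plan is to deduce both statements from Theorem~\ref{godementismonoidal} together with the standard categorical fact that any lax symmetric monoidal functor $F:(\mathcal{A},\otimes)\to(\mathcal{B},\otimes)$ induces a functor $\textnormal{Alg}_{\mathcal{P}}(\mathcal{A})\to\textnormal{Alg}_{\mathcal{P}}(\mathcal{B})$ on categories of algebras over any operad $\mathcal{P}$. Concretely, given a $\mathcal{P}$-algebra $A$ with action maps $\theta_n:\mathcal{P}(n)\otimes A^{\otimes n}\to A$ and lax structure maps $\mu_{X,Y}:F(X)\otimes F(Y)\to F(X\otimes Y)$, one obtains action maps on $F(A)$ by composing iterated $\mu$ with $F(\theta_n)$, and the coherence axioms for $F$ together with the operad axioms for $A$ guarantee that $F(A)$ is again a $\mathcal{P}$-algebra.

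For the second claim, I would apply this principle with $F = f_*$, whose lax symmetric monoidality is supplied by Theorem~\ref{godementismonoidal}. A direct verification is also available: since $f$ is induced by a continuous functor $u:\mathcal{C}'\to\mathcal{C}$, we have $(f_*\mathcal{F})(V)=\mathcal{F}(u(V))$, which inherits its $\mathcal{P}$-algebra structure from $\mathcal{F}(u(V))$, and the restriction morphisms of $f_*\mathcal{F}$ are by definition restriction morphisms of $\mathcal{F}$ and hence are $\mathcal{P}$-algebra maps by hypothesis. For the first claim, I would apply the same principle with $F = G^\bullet:\textnormal{Sh}(\mathcal{C},D)\to\Delta\textnormal{Sh}(\mathcal{C},D)$, where the target carries its levelwise symmetric monoidal structure and the lax monoidality is once again provided by Theorem~\ref{godementismonoidal}. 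A $Comm$-algebra in $\Delta\textnormal{Sh}(\mathcal{C},D)$ under the levelwise tensor product is precisely a cosimplicial object whose every level is a (pre)sheaf of $Comm$-algebras and whose coface and codegeneracy operators are $Comm$-algebra morphisms; these operators come from the unit $\eta:\textnormal{id}\to G$ and multiplication $\mu:G^2\to G$ of the Godement monad, and they respect the induced algebra structure by naturality of the lax monoidal structure maps.

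The main subtlety, rather than a genuine obstacle, is the distinction between presheaf- and sheaf-level tensor products: the sheaf tensor product is defined via sheafification, whereas at the presheaf level the tensor product is computed section by section and the algebra structure is immediate to verify. This is exactly why the lemma concludes with presheaves of algebras rather than sheaves of algebras, and it is this presheaf-level formulation that matches how Theorem~\ref{godementismonoidal} is deployed. Once this point is acknowledged, the proof becomes a formal consequence of that theorem.
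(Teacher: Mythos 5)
Your proof is correct, but for the first claim it takes a genuinely different route from the paper. The paper does not invoke the lax symmetric monoidality of $G^{\bullet}$ at all in this proof: it observes that $G^{0}\mathcal{F}=p_{*}p^{*}\mathcal{F}$ is, by the explicit formulas for stalks and skyscraper sheaves, a product of filtered colimits of $Comm$-algebras, and then cites Lemma~\ref{CDGAcolimits} (closure of CDGAs under limits and colimits) to conclude that each $G^{i}\mathcal{F}$ is a presheaf of $Comm$-algebras. Your argument instead runs everything through Theorem~\ref{godementismonoidal} and the fact that lax symmetric monoidal functors preserve commutative monoids; this is essentially the content of the remark immediately following the lemma in the paper, where the author notes that the monoidal route yields $G^{\bullet}\mathcal{P}$-algebras for arbitrary sheaves of operads $\mathcal{P}$. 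One caution about your general principle: a lax symmetric monoidal $F$ sends $\mathcal{P}$-algebras to $F(\mathcal{P})$-algebras, so to land back in $\mathcal{P}$-algebras you need an operad map $\mathcal{P}\to F(\mathcal{P})$; for $Comm$ this is supplied by the unit constraint (commutative monoids are preserved on the nose), so your argument is fine in the case actually needed, but as stated for ``any operad $\mathcal{P}$'' it is slightly too strong. Your treatment of the second claim (direct verification via $(f_{*}\mathcal{F})(V)=\mathcal{F}(u(V))$) coincides with the paper's, and your closing remark on the presheaf-versus-sheaf tensor product correctly identifies why the conclusion is phrased at the presheaf level. What your approach buys is uniformity and generality across operads; what the paper's buys is a concrete, self-contained verification that leans only on the just-proved Lemma~\ref{CDGAcolimits}.
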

\begin{proof} Recall that $G^{0}\mathcal{F} = p_{*}p^{*}\mathcal{F}$. By definition, $G^{0}$ is a product of filtered colimits of $Comm$-algebras, and so is a presheaf of $Comm$-algebras by Lemma \ref{CDGAcolimits}. By definition, $f_{*}\mathcal{F}(U) = \mathcal{F}(u(U))$ for objects $U$ in $\mathcal{C}$; since $\mathcal{F}$ is a presheaf of $\mathcal{P}$-algebras, it follows that $f_{*}\mathcal{F}$ is a presheaf of $\mathcal{P}$-algebras. 
\end{proof}

\textbf{Remark.} By Theorem \ref{godementismonoidal} and results of \cite{rodriguez}, we obtain an induced functor, which we also denote by $G^{\bullet}$, that sends sheaves of operads $\mathcal{P}$ in $\textnormal{Sh}(\mathcal{C},D)$ to cosimplicial sheaves of operads $G^{\bullet}\mathcal{P}$, which is by definition the same as a cosimplicial operad in $\textnormal{Sh}(\mathcal{C},D)$. Similarly, we obtain an induced functor $G^{\bullet}$ that sends sheaves $\mathcal{F}$ of $\mathcal{P}$-algebras to cosimplicial sheaves $G^{\bullet}\mathcal{F}$ which are $G^{\bullet}\mathcal{P}$-algebras, i.e., algebras over the cosimplicial operad $G^{\bullet}\mathcal{P}$.
\\ \\
An immediate consequence of Theorem \ref{godementismonoidal} and Theorem \ref{hinichschechtman} is then the following corollary.

\begin{cor}\label{sheafOfEinf}
Let $D = \textnormal{Ch}(k$-$\textnormal{mod})$ and let $\mathcal{C}$ be a site with enough points with a terminal object $X$. Let $\mathcal{F}$ be a sheaf of $Comm$-algebras. Then $NG^{\bullet}(\mathcal{F})$ is a presheaf of $\mathcal{Z}$-algebras. In particular, \RG$(\mathcal{C},\mathcal{F})$ is an $\mathcal{Z}$-algebra. 
\end{cor}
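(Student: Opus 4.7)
The plan is to trace the $Comm$-algebra structure through the cosimplicial Godement construction and then invoke the Hinich–Schechtman theorem objectwise on the site. First, by Lemma \ref{Commlemma}, applying $G^{\bullet}$ to $\mathcal{F}$ produces a cosimplicial presheaf of $Comm$-algebras, that is, for every object $U$ of $\mathcal{C}$ we obtain a cosimplicial $Comm$-algebra $G^{\bullet}\mathcal{F}(U)$ in $\textnormal{Ch}(k\textnormal{-mod})$, functorial in $U$. (At each cosimplicial level $i$, this uses the fact that $Comm$-algebras are closed under the products and filtered colimits used in the definition of $p_{*}p^{*}$, i.e.\ Lemma \ref{CDGAcolimits}.)

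Next, I would apply Theorem \ref{hinichschechtman} to each cosimplicial $Comm$-algebra $G^{\bullet}\mathcal{F}(U)$, which yields that $NG^{\bullet}\mathcal{F}(U)$ is naturally a $(Comm\otimes_k \mathcal{Z})$-algebra. The key observation is that because $Comm(n)=k$ in degree $0$ with trivial symmetric action, the level-wise (Hadamard) tensor product satisfies
\begin{equation*}
(Comm\otimes_k \mathcal{Z})(n)\;=\;Comm(n)\otimes_k \mathcal{Z}(n)\;=\;k\otimes_k \mathcal{Z}(n)\;=\;\mathcal{Z}(n),
\end{equation*}
so $Comm\otimes_k \mathcal{Z}\cong \mathcal{Z}$ as operads in $\textnormal{Ch}(k\textnormal{-mod})$. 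Hence each $NG^{\bullet}\mathcal{F}(U)$ is a $\mathcal{Z}$-algebra, and the functoriality clause of Theorem \ref{hinichschechtman} in the cosimplicial variable upgrades this to a presheaf of $\mathcal{Z}$-algebras on $\mathcal{C}$.

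For the second assertion, I would use that since $\mathcal{C}$ has a terminal object $X$, global sections are given by evaluation at $X$, i.e.\ $\Gamma(\mathcal{C},-)=(-)(X)$, and that the Godement construction computes right derived functors: by Lemma \ref{derivedcomposition} and standard properties of the Godement resolution (the augmentation $\mathcal{F}\to G^{\bullet}\mathcal{F}$ is a resolution by objects that are $\Gamma$-acyclic, since they are products of skyscraper sheaves), we have
\begin{equation*}
\textnormal{R}\Gamma(\mathcal{C},\mathcal{F})\;\simeq\;\Gamma\bigl(\mathcal{C},NG^{\bullet}\mathcal{F}\bigr)\;=\;NG^{\bullet}\mathcal{F}(X).
\end{equation*}
Since $NG^{\bullet}\mathcal{F}$ is a presheaf of $\mathcal{Z}$-algebras, evaluating at the terminal object produces a $\mathcal{Z}$-algebra, giving the result.

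The routine parts are the compatibility of $G^{\bullet}$ with the $Comm$-structure and the evaluation at the terminal object; the only conceptually subtle step is the identification $Comm\otimes_k\mathcal{Z}\cong \mathcal{Z}$, so that Hinich–Schechtman delivers a $\mathcal{Z}$-algebra rather than an algebra over some larger operad. I expect the main care to be taken with the level-wise versus operadic bookkeeping at that step, and with confirming that everything in Theorem \ref{hinichschechtman} is natural enough to pass from individual objects $U\in\mathcal{C}$ to the whole presheaf.
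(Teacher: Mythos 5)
Your proposal is correct and follows essentially the same route as the paper's proof: Lemma \ref{Commlemma} gives the cosimplicial presheaf of $Comm$-algebras, Hinich--Schechtman applied objectwise gives a $(Comm\otimes_k\mathcal{Z})$-algebra, the identification $Comm\otimes_k\mathcal{Z}\cong\mathcal{Z}$ (which you justify more explicitly than the paper does) finishes the first claim, and evaluation at the terminal object together with acyclicity of the Godement resolution gives the second.
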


\begin{proof}
By Theorem \ref{godementismonoidal} and Lemma \ref{Commlemma}, $G^{\bullet}\mathcal{F}$ is a cosimplicial presheaf of $Comm$-algebras. Thus on each object $U$ of $\mathcal{C}$, we have that $NG^{\bullet}(\mathcal{F})(U)$ is the normalized complex of a cosimplicial $Comm$-algebra. By Theorem \ref{hinichschechtman}, $NG^{\bullet}(\mathcal{F})(U)$ is a $(Comm \otimes_k \mathcal{Z})$-algebra for each object $U$. Since $Comm \otimes_k \mathcal{Z} = \mathcal{Z}$, we have that this is a presheaf of $\mathcal{Z}$-algebras. Finally, we have \RG$(\mathcal{C},\mathcal{F}) = NG^{\bullet}(\mathcal{F})(X)$ since the Godement resolution is a resolution by flasque sheaves, which are acyclic. 
\end{proof}

We need just one more theorem, due to Mandell (\cite{mandellCochains}, Theorem 5.8). See also Chataur and Cirici (\cite{chataur}, Definition 2.4).

\begin{thm}\label{mandellnormal} There is a cosimplicial normalization functor $N$ that sends cosimplicial \Einf-algebras to \Einf-algebras, that agrees with the normalized cochain functor $N$ on the underlying cochain complexes, and such that, for a constant cosimplicial \Einf-algebra $A^{\bullet}$, the isomorphism of cochain complexes $A^{0} \cong N(A^{\bullet})$ is a morphism of \Einf-algebras.
\end{thm}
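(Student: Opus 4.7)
The plan is to start with a cosimplicial \Einf-algebra $A^{\bullet}$ and produce an \Einf-algebra structure on its normalized complex in two stages. First, invoke the Hinich-Schechtman theorem (Theorem \ref{hinichschechtman}): since $A^{\bullet}$ is an algebra over the operad $\mathcal{E}$, the normalized complex $NA^{\bullet}$ is naturally an algebra over the operad $\mathcal{E} \otimes_k \mathcal{Z}$. By construction the underlying cochain complex of this structure is the usual normalized cochain complex of $A^{\bullet}$, so agreement with the cochain-level functor $N$ is immediate.

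Next, I would transfer the $(\mathcal{E} \otimes_k \mathcal{Z})$-algebra structure to an \Einf-algebra structure by pulling back along a map of operads $\mathcal{E} \to \mathcal{E} \otimes_k \mathcal{Z}$. To produce this map I would combine the fixed map $\mathcal{E} \to \mathcal{Z}$ from the paper's conventions with a diagonal $\mathcal{E} \to \mathcal{E} \otimes \mathcal{E}$, composing to obtain $\mathcal{E} \to \mathcal{E} \otimes_k \mathcal{Z}$. Alternatively, one can exploit that $\mathcal{E} \otimes_k \mathcal{Z}$ is itself \Einf (being the tensor of an \Einf operad with an acyclic operad whose arity components retain free symmetric group actions) and use cofibrancy of $\mathcal{E}$ to lift the identity along a quasi-isomorphism of operads, yielding the required map.

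For the constancy clause, I specialize to a constant cosimplicial \Einf-algebra $A^{\bullet}$ whose cofaces and codegeneracies are all identities. At the cochain level $NA^{\bullet}$ collapses to $A^{0}$ concentrated in degree zero, because in each positive degree the intersections of kernels of the codegeneracies vanish. To verify that the resulting isomorphism $A^{0} \cong NA^{\bullet}$ is one of \Einf-algebras, I would trace the Hinich-Schechtman construction in the constant case: on constant cosimplicial data the Eilenberg-Zilbur shuffle maps reduce to identities in degree zero, so the $(\mathcal{E} \otimes_k \mathcal{Z})$-action factors through the first tensor factor and the induced $\mathcal{E}$-action coincides with the original one on $A^{0}$.

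The principal obstacle is the existence and naturality of the operad map $\mathcal{E} \to \mathcal{E} \otimes_k \mathcal{Z}$. For a generic \Einf operad this requires either a concrete diagonal-type structure (as is available for the Barratt-Eccles operad or the surjection operad favoured by Mandell) or a model-theoretic lifting argument invoking cofibrancy. The paper already defers the existence of the structural map $\mathcal{E} \to \mathcal{Z}$ to Mandell, so I would likewise appeal to \cite{mandellCochains} for the specific model of $\mathcal{E}$ that makes the present construction strictly functorial rather than only well-defined up to homotopy.
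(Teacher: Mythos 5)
The paper does not actually prove this statement: it is imported verbatim from Mandell (\cite{mandellCochains}, Theorem 5.8), so there is no internal proof to compare against. That said, your outline is the standard argument and is consistent with the machinery the paper does set up: Theorem \ref{hinichschechtman} makes $NA^{\bullet}$ an $(\mathcal{E}\otimes_k\mathcal{Z})$-algebra with the correct underlying complex, and one then restricts along an operad map $\mathcal{E}\to\mathcal{E}\otimes_k\mathcal{Z}$ (the paper itself invokes exactly such a composite $\mathcal{E}\to\mathcal{E}\otimes_k\mathcal{E}\to\mathcal{E}\otimes_k\mathcal{Z}$ in the proof of the ringed-sites lemma). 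Your identification of that operad map as the principal obstacle is correct, and so is the constant-case computation: $N$ of a constant cosimplicial object collapses to cosimplicial degree $0$, where the Eilenberg--Zilber action reduces to the augmentation $\mathcal{Z}\to Comm$.

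Two points need more care. First, the ``diagonal'' $\mathcal{E}\to\mathcal{E}\otimes_k\mathcal{E}$ does not exist for an arbitrary $E_{\infty}$-operad --- the diagonal of a chain complex is not a chain map --- so this genuinely requires a Hopf-operad model (Barratt--Eccles, or the surjection operad) and should be stated as a standing hypothesis on the fixed operad $\mathcal{E}$ rather than an afterthought. Second, the constant-case clause depends on \emph{which} map $\mathcal{E}\to\mathcal{E}\otimes_k\mathcal{Z}$ is chosen: your claim that ``the induced $\mathcal{E}$-action coincides with the original one'' silently uses that the composite $\mathcal{E}\to\mathcal{E}\otimes_k\mathcal{Z}\to\mathcal{E}\otimes_k Comm\cong\mathcal{E}$ is the identity, i.e., that the diagonal is counital and that $\mathcal{E}\to\mathcal{Z}\to Comm$ is the augmentation. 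Your alternative route (lifting along a quasi-isomorphism using cofibrancy of $\mathcal{E}$) produces a map only up to homotopy and carries no such counitality guarantee, so it would yield the first two clauses of the theorem but not the strict statement about $A^{0}\cong N(A^{\bullet})$; only the Hopf-operad route delivers the theorem as stated.
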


\begin{lemma}\label{secondmap} Let $\mathcal{O}$ be a sheaf of $k$-algebras. The cosimplicial Godement functor has an augmentation $\mathcal{O} \rightarrow G^{\bullet}\mathcal{O}$ which is a map of presheaves of $k$-algebras in degree $0$. The existence of the augmentation is equivalent to the existence of a map of cosimplicial presheaves of $k$-algebras $\mathcal{O}^{\bullet} \rightarrow G^{\bullet}\mathcal{O}$. The composition $\mathcal{O} \xrightarrow{\cong} N\mathcal{O^{\bullet}} \rightarrow NG^{\bullet}\mathcal{O}$ is a map of presheaves of \Einf-algebras.
\end{lemma}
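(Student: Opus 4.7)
The plan splits into three steps, one for each clause of the statement: establish the augmentation as a morphism of presheaves of $k$-algebras; extend it uniquely to a morphism of cosimplicial presheaves of $k$-algebras; then apply Mandell's cosimplicial normalization of Theorem \ref{mandellnormal} to pass to \Einf-algebras.

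For the first step, I would observe that the cosimplicial Godement functor $G^{\bullet}$ arises from the monad $T = p_{*}p^{*}$ of the adjunction in Definition \ref{godementpoints}, and that the augmentation $\mathcal{O} \to G^{0}\mathcal{O}$ is the unit of this adjunction. Concretely, the unit sends a section $s \in \mathcal{O}(U)$ to the point-indexed tuple of germs $(s_{x})_{x \in S}$, which is visibly a $k$-algebra homomorphism. Alternatively, this follows from Lemma \ref{Commlemma} upon noting that the unit is the augmentation coface of the augmented cosimplicial object $\mathcal{O} \to G^{\bullet}\mathcal{O}$, and Lemma \ref{Commlemma} guarantees all such structural maps are morphisms of $Comm$-algebras.

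For the second step, the equivalence between the augmentation and a cosimplicial map $\mathcal{O}^{\bullet} \to G^{\bullet}\mathcal{O}$ is a standard categorical fact: giving a map from a constant cosimplicial object $\mathcal{O}^{\bullet}$ to any cosimplicial object is the same as giving an augmentation, since the higher components of the map are forced to be compositions of the augmentation with cosimplicial structure maps, and consistency follows from the cosimplicial identities of $G^{\bullet}\mathcal{O}$. By Lemma \ref{Commlemma}, $G^{\bullet}\mathcal{O}$ is a cosimplicial presheaf of $k$-algebras and all of its structure maps are $k$-algebra morphisms, so the cosimplicial extension of the augmentation is itself a morphism of cosimplicial presheaves of $k$-algebras.

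For the third step, since the fixed operad maps $\mathcal{E} \to \mathcal{Z} \to Comm$ make any $k$-algebra into an \Einf-algebra, the map $\mathcal{O}^{\bullet} \to G^{\bullet}\mathcal{O}$ is also a morphism of cosimplicial presheaves of \Einf-algebras. Applying the cosimplicial normalization $N$ of Theorem \ref{mandellnormal} object-wise in $\mathcal{C}$ gives a morphism of presheaves of \Einf-algebras $N\mathcal{O}^{\bullet} \to NG^{\bullet}\mathcal{O}$. Since $\mathcal{O}^{\bullet}$ is the constant cosimplicial object on $\mathcal{O}$, Theorem \ref{mandellnormal} further asserts that the canonical isomorphism $\mathcal{O} \cong N\mathcal{O}^{\bullet}$ is itself a map of \Einf-algebras. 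Composing the two yields the desired morphism of presheaves of \Einf-algebras.

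The main obstacle is philosophical rather than technical: one must carefully track in which category each morphism is being considered, since the map is first built in presheaves of $k$-algebras, then reinterpreted via the fixed operad maps as a morphism of presheaves of \Einf-algebras, then normalized. Provided one invokes Lemma \ref{Commlemma} and Theorem \ref{mandellnormal} at the appropriate moments, the verifications reduce to largely formal consequences of functoriality.
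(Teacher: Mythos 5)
Your proposal is correct and follows essentially the same route as the paper: identify the augmentation with a map from the constant cosimplicial object (the paper cites Stacks Project Tag 018F for this), view the resulting map of cosimplicial presheaves of $k$-algebras as a map of $Comm$-algebras and hence of \Einf-algebras via the fixed operad maps, and conclude by applying Theorem \ref{mandellnormal}. The extra detail you supply (the unit of the $(p^{*},p_{*})$ adjunction sending a section to its tuple of germs, and the appeal to Lemma \ref{Commlemma} for the structure maps) is consistent with, and merely an expansion of, the paper's terser argument.
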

\begin{proof} That an augmentation of a cosimplicial object is equivalent to a map from the constant cosimplicial object is by composing the augmentation map with the cosimplicial maps; see (\cite{stacks} Tag 018F Lemma 14.20.2). The map $\mathcal{O}^{\bullet} \rightarrow G^{\bullet}\mathcal{O}$ of cosimplicial presheaves of $k$-algebras is then a map of cosimplicial presheaves of $Comm$-algebras, and so \Einf-algebras. Applying Theorem \ref{mandellnormal} gives the last statement.
\end{proof}

\begin{lemma} Let $\mathcal{O}$ and $\mathcal{O}'$ be sheaves of $k$-algebras on sites $\mathcal{C}$, $\mathcal{C}'$ respectively. Assume $\mathcal{C}$ and $\mathcal{C}'$ both have terminal objects. A morphism of ringed sites $(\mathcal{C},\mathcal{O}) \xrightarrow{f} (\mathcal{C}',\mathcal{O}') $ induced by a continuous functor $\mathcal{C}' \rightarrow \mathcal{C}$  induces a map of \Einf-algebras \RG$(\mathcal{C}',\mathcal{O}') \rightarrow$ \RG$(\mathcal{C},\mathcal{O})$.  
\end{lemma}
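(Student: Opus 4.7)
The plan is to produce the induced map as a zigzag of $E_\infty$-algebra morphisms, assembled from the comparison map of ringed sites together with the Godement resolutions of Lemma \ref{secondmap} on both $\mathcal{C}$ and $\mathcal{C}'$. First I would unpack the data: the morphism of ringed sites consists of a continuous functor $u: \mathcal{C}' \to \mathcal{C}$ inducing the morphism of sites $f: \mathcal{C} \to \mathcal{C}'$, together with a map $\phi: \mathcal{O}' \to f_*\mathcal{O}$ of sheaves of $k$-algebras on $\mathcal{C}'$. Since a sheaf of $k$-algebras is a sheaf of $Comm$-algebras, $\phi$ is a morphism of sheaves of $Comm$-algebras.

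Next I would apply the augmentation $\mathcal{O} \to NG^\bullet \mathcal{O}$ on $\mathcal{C}$ from Lemma \ref{secondmap} and push it forward by $f_*$; by Lemma \ref{Commlemma}, $f_*$ preserves the $Comm$-algebra structure, so that the composite
\[
\psi: \mathcal{O}' \xrightarrow{\phi} f_*\mathcal{O} \longrightarrow f_*NG^\bullet \mathcal{O}
\]
is a map of presheaves of $E_\infty$-algebras on $\mathcal{C}'$. Applying the cosimplicial Godement construction on $\mathcal{C}'$ followed by normalization, and combining with the augmentations of Lemma \ref{secondmap} on $\mathcal{C}'$, then produces a zigzag of presheaves of $E_\infty$-algebras on $\mathcal{C}'$
\[
NG^\bullet_{\mathcal{C}'}\mathcal{O}' \xrightarrow{NG^\bullet_{\mathcal{C}'}\psi} NG^\bullet_{\mathcal{C}'}\bigl(f_*NG^\bullet_{\mathcal{C}}\mathcal{O}\bigr) \xleftarrow{\simeq} f_*NG^\bullet_{\mathcal{C}}\mathcal{O}.
\]
The right-hand arrow is the augmentation, which is a sectionwise quasi-isomorphism because each term of $NG^\bullet_{\mathcal{C}}\mathcal{O}$ is a flasque sheaf (a product of skyscrapers) and $f_*$ sends flasque sheaves to flasque sheaves.

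Evaluating this zigzag at the terminal object $X'$ of $\mathcal{C}'$, Corollary \ref{sheafOfEinf} identifies the left endpoint with $\textnormal{R}\Gamma(\mathcal{C}', \mathcal{O}')$, while the right endpoint becomes $f_*NG^\bullet_{\mathcal{C}}\mathcal{O}(X') = NG^\bullet_{\mathcal{C}}\mathcal{O}(u(X')) = NG^\bullet_{\mathcal{C}}\mathcal{O}(X) = \textnormal{R}\Gamma(\mathcal{C}, \mathcal{O})$, using that $u$ carries terminal object to terminal object (as is the case in the examples of interest $\delta$ and $\varepsilon$). Inverting the quasi-isomorphism in the homotopy category of $E_\infty$-algebras then produces the induced map $\textnormal{R}\Gamma(\mathcal{C}', \mathcal{O}') \to \textnormal{R}\Gamma(\mathcal{C}, \mathcal{O})$.

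The principal technical obstacle is that $f_*NG^\bullet_{\mathcal{C}}\mathcal{O}$ is a cochain complex of presheaves rather than a single presheaf, so the outer application of $NG^\bullet_{\mathcal{C}'}$ requires extending Corollary \ref{sheafOfEinf} from sheaves to cochain complexes via totalization of the resulting double cosimplicial complex. Theorem \ref{hinichschechtman} together with Theorem \ref{mandellnormal} should ensure that this totalization carries an $E_\infty$-structure compatible with all augmentations, so that every arrow of the zigzag is a morphism of $E_\infty$-algebras and the quasi-isomorphism inverted above respects these structures.
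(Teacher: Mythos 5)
Your proposal is correct and follows essentially the same route as the paper: factor the comparison as $\mathcal{O}' \to f_{*}\mathcal{O} \to f_{*}NG^{\bullet}\mathcal{O} = \textnormal{R}f_{*}\mathcal{O}$, apply the outer Godement resolution on $\mathcal{C}'$ to promote everything to maps of presheaves of \Einf-algebras (via Hinich--Schechtman and the operad maps $\mathcal{E}\to\mathcal{E}\otimes_k\mathcal{E}\to\mathcal{E}\otimes_k\mathcal{Z}$), and identify the endpoints after inverting the augmentation quasi-isomorphism on global sections. Two minor remarks: the identification of the right endpoint with $\textnormal{R}\Gamma(\mathcal{C},\mathcal{O})$ is better routed through the adjunction argument of Lemma \ref{derivedcomposition}, which requires no assumption that $u$ carries the terminal object to the terminal object; and the ``double cosimplicial'' obstacle you flag does not actually arise, since $f_{*}NG^{\bullet}_{\mathcal{C}}\mathcal{O}$ is already a single presheaf of cochain complexes (the inner normalization having been performed), so the outer $NG^{\bullet}_{\mathcal{C}'}$ applies directly in the setting $D=\textnormal{Ch}(k\textnormal{-mod})$ already covered by Theorem \ref{godementismonoidal}.
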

\begin{proof} A morphism of ringed sites by definition has the data of a map of sheaves of rings $\mathcal{O'} \rightarrow f_{*}\mathcal{O}$. Viewing $f_{*}\mathcal{O}$ as an object in the derived category concentrated in degree $0$, there is a natural chain map $f_{*}\mathcal{O} \rightarrow \textnormal{R}f_{*}\mathcal{O}$. By functoriality of $\textnormal{RHom}_{\textnormal{PSh}(\mathcal{C}')}(*_{\mathcal{C}'},-)$ we obtain maps
\begin{equation*}
\textnormal{RHom}_{ \textnormal{PSh}(\mathcal{C}')}(*_{\mathcal{C}'}, \mathcal{O}') \rightarrow \textnormal{RHom}_{\textnormal{PSh}(\mathcal{C}')}(*_{\mathcal{C}'}, f_{*}\mathcal{O}) \rightarrow \textnormal{RHom}_{\textnormal{PSh}(\mathcal{C}')}(*_{\mathcal{C}'}, \textnormal{R}f_{*}\mathcal{O}) \simeq \textnormal{RHom}_{\textnormal{PSh}(\mathcal{C})}(*_{\mathcal{C}}, \mathcal{O})
\end{equation*}
where the last isomorphism follows from Lemma \ref{derivedcomposition}. The composition of these maps gives our map \RG$(\mathcal{C}',\mathcal{O}') \rightarrow $ \RG$(\mathcal{C},\mathcal{O})$. The claim is that this naturally induced map is a map of \Einf-algebras. Note that Theorem \ref{godementismonoidal} also implies that the last isomorphism from Lemma \ref{derivedcomposition} is a quasi-isomorphism of \Einf-algebras, for sheaves of $k$-algebras.
\\ \\
First, note we already have maps of bounded below cochain complexes of sheaves of abelian groups $\mathcal{O'} \rightarrow f_{*}\mathcal{O} \rightarrow \textnormal{R}f_{*}\mathcal{O}$ where we view $\mathcal{O}'$ and $f_{*}\mathcal{O}$ as complexes concentrated in degree $0$. The first map $\mathcal{O}' \rightarrow f_{*}\mathcal{O}$ is a map of presheaves of $k$-algebras, so of $Comm$-algebras, and so of \Einf-algebras. We have that $\textnormal{R}f_{*}\mathcal{O}$ is computed as $f_{*}NG^{\bullet}\mathcal{O}$, which is a presheaf of $\mathcal{Z}$-algebras, and so of \Einf-algebras, by Corollary \ref{sheafOfEinf} and Lemma \ref{Commlemma}. The second map is in fact the functor $f_{*}$ applied to the map $\mathcal{O} \xrightarrow{\cong} N\mathcal{O}^{\bullet} \rightarrow NG^{\bullet}\mathcal{O}$ in Lemma \ref{secondmap}; since this map is a map of presheaves of \Einf-algebras, applying the functor $f_{*}$ yields a map of presheaves of \Einf-algebras. We then have the following commutative square

\begin{center}
\begin{tikzcd}
\mathcal{O'} \arrow[r, "\simeq"]  \arrow[d] & NG^{\bullet}(\mathcal{O'}) \arrow[d, "{*}" right] \\
\textnormal{R}f_{*}\mathcal{O} \arrow[r, "\simeq"] & NG^{\bullet}(\textnormal{R}f_{*}\mathcal{O})
\end{tikzcd}
\end{center}
by functoriality of the Godement resolution on the underlying complexes. The left hand vertical map is a map of presheaves of \Einf-algebras, by the above discussion. The right hand vertical map is then a map of presheaves of $(E_{\infty} \otimes_k \mathcal{Z})$-algebras, and so a map of presheaves of \Einf-algebras; this follows from the existence of maps of operads $\mathcal{E} \rightarrow \mathcal{E} \otimes_k \mathcal{E} \rightarrow \mathcal{E} \otimes_k \mathcal{Z}$. Taking global sections gives us a map of \Einf-algebras 
\begin{equation*}
\textnormal{R}\Gamma(\mathcal{C}',\mathcal{O}') = \Gamma(\mathcal{C}', NG^{\bullet}(\mathcal{O}')) \xrightarrow{*} \Gamma(\mathcal{C}',NG^{\bullet}(\textnormal{R}f_{*}{\mathcal{O}})) = \textnormal{R}\Gamma(\mathcal{C},\mathcal{O})
\end{equation*} 
where the map $*$ is exactly the map described in the first paragraph above, by definition of derived global sections.
\end{proof}

\begin{cor} (Artin comparison) Let $X$ be a smooth proper complex variety. The zig-zag of sites in Theorem \ref{zigzagsites} gives a quasi-isomorphism of \Einf-algebras between \FpEtaleCochains and $\textnormal{R}\Gamma(X(\C),\mathbb{F}_p)$. 
\end{cor}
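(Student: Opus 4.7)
The plan is to apply the preceding lemma to each leg of the zig-zag in Theorem \ref{zigzagsites}, after equipping every site with its constant sheaf $\mathbb{F}_p$. Both $\delta: X_{\textnormal{cl}} \rightarrow X(\C)$ and $\varepsilon: X_{\textnormal{cl}} \rightarrow X_{\textnormal{\'et}}$ are induced by continuous functors (the inclusion $X(\C) \hookrightarrow X_{\textnormal{cl}}$ and the analytification $X' \mapsto X'(\C)$, respectively), so the only additional check needed is that each upgrades to a morphism of \textit{ringed} sites. For $\delta$, the pushforward satisfies $\delta_*\mathbb{F}_p(U) = \mathbb{F}_p(U)$ on any open $U \subseteq X(\C)$, so the identity is a map of sheaves of $\mathbb{F}_p$-algebras. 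For $\varepsilon$, pulling back locally constant $\mathbb{F}_p$-valued sections along analytification produces a canonical map of sheaves of $\mathbb{F}_p$-algebras $\mathbb{F}_p \rightarrow \varepsilon_*\mathbb{F}_p$.

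Next I would invoke the preceding lemma on each of the two morphisms of ringed sites to obtain a zig-zag of maps of \Einf-algebras
\begin{equation*}
\textnormal{R}\Gamma(X_{\textnormal{\'et}}, \mathbb{F}_p) \longrightarrow \textnormal{R}\Gamma(X_{\textnormal{cl}}, \mathbb{F}_p) \longleftarrow \textnormal{R}\Gamma(X(\C), \mathbb{F}_p).
\end{equation*}
Finally, I would observe that on underlying cochain complexes these two maps are precisely the classical Artin comparison maps recalled at the start of this section: the right arrow is a quasi-isomorphism because $\delta_*$ is an equivalence of topoi by Lemma \ref{deltareflects}, and the left arrow is a quasi-isomorphism by the SGA IV theorem stated above, which identifies locally constant constructible torsion sheaves on the two sites via the quasi-inverse pair $(\varepsilon^{*}, \varepsilon_*)$. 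Since by our conventions a quasi-isomorphism of \Einf-algebras is any morphism whose underlying chain map is a quasi-isomorphism, the zig-zag above gives the desired quasi-isomorphism.

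The only residual subtlety, and the main obstacle, is the identification on underlying complexes of the \Einf-maps manufactured by the preceding lemma with the familiar Artin comparison maps. Inspecting the proof of that lemma shows the \Einf-map is constructed from the canonical ring map $\mathbb{F}_p \rightarrow f_*\mathbb{F}_p$ by applying the normalized Godement resolution and taking global sections, which is exactly the construction inducing the classical comparison. Thus the essential content has already been carried out in the preceding lemmas, and the corollary is their clean assembly.
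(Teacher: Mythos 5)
Your proposal is correct and is essentially the assembly the paper intends: apply the preceding lemma on morphisms of ringed sites to each leg of the zig-zag from Theorem \ref{zigzagsites} (using that all three sites have enough points, per Lemma \ref{Xclenoughpoints} and the two examples), and identify the resulting \Einf-maps on underlying complexes with the classical comparison quasi-isomorphisms coming from Lemma \ref{deltareflects} and the SGA IV equivalence for $\varepsilon$. No meaningful deviation from the paper's route.
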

Recall that the singular cochains of a space $C^{*}_{\textnormal{sing}}(X,k)$ have a natural \Einf-$k$-algebra structure, as an algebra over the Eilenberg-Zilbur operad $\mathcal{Z}_k$; this structure arises from the failure of commutativity of the cup product on cochains with values in a general field $k$, and the Eilenberg-Zilbur operad action is essentially given by the Alexander-Whitney maps (\cite{maySheaf}, Theorem 3.9). This \Einf-algebra structure can be computed again using the Godement resolution. We conclude with the following (again, very classical) statement neatly explained by Petersen in \cite{petersen}, who also assumes the space only be \textit{cohomologically locally connected}.

\begin{thm}\label{ConstantandSing} Let $X$ be a locally contractible, paracompact Hausdorff space. There is a quasi-isomorphism of \Einf-algebras between $\textnormal{R}\Gamma(X, k)$ and $C^{*}_{\textnormal{sing}}(X, k)$. 
\end{thm}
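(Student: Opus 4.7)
The strategy is to mediate between $\textnormal{R}\Gamma(X,k)$ and $C^{*}_{\textnormal{sing}}(X,k)$ by a third $E_\infty$-algebra, namely the derived global sections of the sheafification of the presheaf $\mathcal{S}: U \mapsto C^{*}_{\textnormal{sing}}(U,k)$, and to produce $E_\infty$-algebra morphisms in both directions. Since the paper's notion of quasi-isomorphism of $E_\infty$-algebras is just a morphism inducing a quasi-isomorphism of underlying complexes, all the hard work in Section \ref{section2} has already endowed the relevant maps with their $E_\infty$-structure; what remains is to identify the players and invoke the classical underlying quasi-isomorphism.

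First I would observe that $\mathcal{S}$ is a presheaf of $\mathcal{Z}$-algebras (via the Alexander-Whitney maps acting as the Eilenberg-Zilbur operad), hence a presheaf of $E_\infty$-algebras via the fixed operad map $\mathcal{E} \to \mathcal{Z}$. The inclusion of locally constant functions yields a presheaf map $\underline{k} \to \mathcal{S}$; sheafifying, I get a morphism of sheaves of $E_\infty$-algebras $\underline{k}^{\textnormal{sh}} \to \mathcal{S}^{\textnormal{sh}}$. Applying the normalized Godement resolution $NG^{\bullet}$ and taking global sections, the combination of Corollary \ref{sheafOfEinf} and Lemma \ref{secondmap} produces the zig-zag of $E_\infty$-algebra morphisms
\begin{equation*}
C^{*}_{\textnormal{sing}}(X,k) = \mathcal{S}(X) \xrightarrow{\alpha} \Gamma(X, NG^{\bullet}\mathcal{S}^{\textnormal{sh}}) \xleftarrow{\beta} \Gamma(X, NG^{\bullet}\underline{k}^{\textnormal{sh}}) = \textnormal{R}\Gamma(X,k),
\end{equation*}
where $\alpha$ is the global sections of the augmentation from Lemma \ref{secondmap} (using that the Godement resolution factors through stalks, hence through sheafification), and $\beta$ is induced by the sheaf morphism $\underline{k}^{\textnormal{sh}} \to \mathcal{S}^{\textnormal{sh}}$.

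The map $\beta$ is easy: local contractibility of $X$ ensures that for any $x \in X$, sufficiently small contractible neighborhoods $U$ of $x$ satisfy $C^{*}_{\textnormal{sing}}(U,k) \simeq k$, so the stalk map $\underline{k}^{\textnormal{sh}}_x \to \mathcal{S}^{\textnormal{sh}}_x$ is a quasi-isomorphism; the Godement resolution, being a resolution by flasque (hence $\Gamma$-acyclic) sheaves, then transports this to a quasi-isomorphism on global sections. The map $\alpha$ is the classical and harder half: one must show that on a paracompact Hausdorff $X$ the natural map from singular cochains to the derived global sections of $\mathcal{S}^{\textnormal{sh}}$ is a quasi-isomorphism. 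The main obstacle is precisely this paracompactness argument, which requires that the sheaves of local singular cochains are soft (so that $\Gamma(X,\mathcal{S}^{\textnormal{sh}}) \simeq \textnormal{R}\Gamma(X,\mathcal{S}^{\textnormal{sh}})$) and that sheafification does not lose cohomology on paracompact $X$ (so that $C^{*}_{\textnormal{sing}}(X,k) \to \Gamma(X,\mathcal{S}^{\textnormal{sh}})$ is a quasi-isomorphism); both rely on partition-of-unity / barycentric subdivision arguments. Rather than reproduce these, I would invoke the detailed treatment in \cite{petersen} (whose conclusion is exactly that $\alpha$ is a quasi-isomorphism on underlying complexes), thereby completing the zig-zag.
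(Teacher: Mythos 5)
Your proposal is correct and takes essentially the same route as the paper, which gives no independent argument here: it defers entirely to Petersen \cite{petersen} with the remark that one passes to the Godement resolution to upgrade the classical singular-vs-sheaf comparison to a map of $E_\infty$-algebras, and your zig-zag through $\Gamma(X, NG^{\bullet}\mathcal{S}^{\textnormal{sh}})$ is precisely that strategy spelled out. The only detail to flag is that Lemma \ref{secondmap} and Corollary \ref{sheafOfEinf} are stated for $Comm$-algebras concentrated in degree $0$, so for the presheaf of $\mathcal{Z}$-algebras $U \mapsto C^{*}_{\textnormal{sing}}(U,k)$ you must additionally invoke the operad maps $\mathcal{E} \rightarrow \mathcal{E}\otimes_k\mathcal{E} \rightarrow \mathcal{Z}\otimes_k\mathcal{Z}$ exactly as in the paper's ringed-sites lemma, but that machinery is already in place in Section \ref{section2}.
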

Petersen's proof of the above follows the same strategy as the other proofs in this section, where one passes to the Godement resolution to see that one has a map of \Einf-algebras. 

\section{$p$-adic homotopy theory}
We are now ready to relate the result of Bhatt-Morrow-Scholze to the $p$-adic homotopy theory of the complex variety $X$. We simply apply $-\otimes^L_{\mathbb{F}_p}\overline{\mathbb{F}_p}$ to the Bhatt-Morrow-Scholze result and apply Mandell's functor $\overline{\mathbb{U}}$. First we will briefly review the proof of Theorem \ref{BMSetale}, and then establish some basic change-of-scalars results whose proofs we will refer to Mandell \cite{mandellEinf}, or Kriz and May \cite{krizMay}. Notice that since $\overline{\mathbb{F}_p}$ is a field extension over \Fp, it is faithfully flat. Thus, the functor $-\otimes^L_{\mathbb{F}_p}\overline{\mathbb{F}_p}$ is equal to the functor $-\otimes_{\mathbb{F}_p}\overline{\mathbb{F}_p}$ on the underlying complexes in the derived category. \\ \\
Theorem \ref{BMSetale} can be obtained by first proving the following theorem (\cite{bhatt}, Theorem 9.1) for affine opens of $\mathfrak{X}$, then gluing the result to pass from local to global.

\begin{thm}\label{BMSbigetale} (Bhatt-Scholze) Let $\mathfrak{X} = \textnormal{Spf}(S)$ be a formal affine scheme. There is a canonical quasi-isomorphism of \Einf-algebras 
\begin{center}
\RG$(\textnormal{Spec}(S[1/p],\Z/p^{n}) \simeq (\Prism_{S/A}[1/d]/p^{n})^{\phi=1} $
\end{center}
for each $n \geq 1$.
\end{thm}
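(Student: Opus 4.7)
My plan is to prove the theorem by combining quasi-syntomic descent with an explicit computation in the perfectoid case, tracking the $E_\infty$-algebra structure throughout via the Godement machinery of Section \ref{section2}. First, I would verify that both functors
\begin{equation*}
S \mapsto \textnormal{R}\Gamma_{\textnormal{\'et}}(\textnormal{Spec}(S[1/p]), \Z/p^n) \quad \textnormal{and} \quad S \mapsto (\Prism_{S/A}[1/d]/p^n)^{\phi=1}
\end{equation*}
define sheaves of $E_\infty$-algebras on the quasi-syntomic site of $p$-complete rings with bounded $p^\infty$-torsion. On the prismatic side, this is essentially built into the construction of $\Prism_{S/A}$, since $-\otimes_A A[1/d]/p^n$ and the homotopy Frobenius equalizer $(-)^{\phi=1} := \textnormal{fib}(\phi - 1)$ both commute with cosimplicial totalizations. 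On the \'etale side, one invokes Bhatt-Mathew's arc-descent for \'etale cohomology of generic fibers, which specializes to quasi-syntomic descent. Together these reduce the theorem to the case where $S$ is perfectoid, since every quasi-syntomic ring admits a quasi-syntomic cover by perfectoid rings.

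When $S$ is perfectoid, the structure prism is $(A_{\textnormal{inf}}(S) = W(S^\flat), \ker(\theta))$, and the prismatic complex collapses to $\Prism_{S/A_{\textnormal{inf}}} = A_{\textnormal{inf}}(S)$ concentrated in degree zero. The right-hand side therefore becomes $(W_n(S^\flat)[1/[\pi^\flat]])^{\phi=1}$, where $[\pi^\flat]$ is the Teichm\"uller lift of a tilted uniformizer. The key input is then the Artin-Schreier-Witt short exact sequence of \'etale sheaves of $\Z/p^n$-algebras in characteristic $p$,
\begin{equation*}
0 \longrightarrow \Z/p^n \longrightarrow W_n \xrightarrow{F-1} W_n \longrightarrow 0,
\end{equation*}
applied to the tilted generic fiber $\textnormal{Spec}(S^\flat[1/\pi^\flat])$. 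Combined with Scholze's tilting equivalence
\begin{equation*}
\textnormal{R}\Gamma_{\textnormal{\'et}}(\textnormal{Spec}(S[1/p]), \Z/p^n) \simeq \textnormal{R}\Gamma_{\textnormal{\'et}}(\textnormal{Spec}(S^\flat[1/\pi^\flat]), \Z/p^n)
\end{equation*}
and the vanishing of higher \'etale cohomology of $W_n$ on perfect rings localized at the tilted uniformizer, this realizes the left-hand side as the homotopy fixed points of $F$ on $W_n(S^\flat)[1/[\pi^\flat]]$, matching the right-hand side as a cochain complex.

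To upgrade the comparison to a quasi-isomorphism of $E_\infty$-algebras, I would observe that the Artin-Schreier-Witt sequence is a short exact sequence of sheaves of commutative $\Z/p^n$-algebras, so applying the normalized Godement resolution on the \'etale site yields a fiber sequence of presheaves of $E_\infty$-algebras by Corollary \ref{sheafOfEinf} and Lemma \ref{Commlemma}, and $F-1$ is then a morphism of $E_\infty$-algebras in the homotopy category. On the prismatic side, $\phi$ acts by $E_\infty$-algebra maps by construction of $\Prism_{S/A}$, so $\textnormal{fib}(\phi - 1)$ inherits an $E_\infty$-structure. The main obstacle will be verifying that quasi-syntomic descent commutes with formation of the homotopy Frobenius equalizer and with the $E_\infty$-enhancement, so that the perfectoid comparison propagates cleanly to arbitrary $S$; this hinges on the strict functoriality of the Frobenius lift across the prismatic site and on checking that the Artin-Schreier-Witt comparison is strictly (not merely coherently) $\phi$-equivariant, so that the two totalizations one must compare are genuinely the same.
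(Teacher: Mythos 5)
Your overall architecture matches the proof sketched in the paper (and the actual argument of Bhatt--Scholze): reduce by descent to the perfectoid case, where $\Prism_{S/A}$ collapses to $A_{\textnormal{inf}}(S)=W(S^\flat)$ in degree zero, identify the Frobenius fixed points with the \'etale cohomology of the tilted generic fiber via Artin--Schreier--Witt, and transport back through Scholze's tilting equivalence of \'etale sites, which is also what carries the $E_\infty$-structure as in Section \ref{section2}. You supply the Artin--Schreier--Witt details that the paper's sketch leaves implicit, which is welcome.

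Two points, one of which is a genuine gap. First, quasi-syntomic descent does not reduce you to the perfectoid case: a quasi-syntomic cover of $S$ by a perfectoid ring has a \v{C}ech nerve whose higher terms are only quasiregular semiperfectoid, and for such rings $\Prism$ is not $A_{\textnormal{inf}}$ concentrated in degree zero, so your perfectoid computation does not apply to the terms of the nerve. This is precisely why the paper (following \cite{bhatt}, Lemma 8.8) works with the arc topology, in which affine perfectoids form a basis and the entire \v{C}ech nerve can be taken perfectoid; you invoke arc-descent only on the \'etale side, but you need it (or the passage to the perfectoidization, which does not change $(-[1/d]/p^n)^{\phi=1}$) on the prismatic side as well. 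Second, your claim that the sheaf property of the prismatic side is ``essentially built into the construction'' because $-\otimes_A A[1/d]/p^n$ and $\textnormal{fib}(\phi-1)$ commute with cosimplicial totalizations fails for the localization: inverting $d$ is a filtered colimit and does not commute with totalization in general. The descent property of $S\mapsto(\Prism_{S/A}[1/d]/p^n)^{\phi=1}$ is a nontrivial lemma in Bhatt--Scholze, not a formal consequence, and your plan should treat it as such.
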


\textit{Very rough sketch of proof.} Bhatt and Scholze first prove the two functors
\begin{equation*}
\begin{split}
\textnormal{Spf}(S) \mapsto  \textnormal{R}\Gamma(\textnormal{Spec}(S[1/p],\Z/p^{n}) \\
\textnormal{Spf}(S) \mapsto (\Prism_{S/A}[1/d]/p^{n})^{\phi=1}
\end{split}
\end{equation*}
are sheaves in the arc topology on affine formal schemes. Given this, they assume $S$ is perfectoid, using that affine perfectoids are a basis for the arc topology (\cite{bhatt}, Lemma 8.8). Since $S$ is perfectoid, they can then identify $ (\Prism_{S/A}[1/d]/p^{n})^{\phi=1}$ with $\textnormal{R}\Gamma(\textnormal{Spec}(S^\flat[1/d]),\Z/p^n)$ where $S^\flat$ is the tilt of $S$. The quasi-isomorphism of \Einf-algebras between \RG$(\textnormal{Spec}(S[1/p],\Z/p^{n})$ and  $\textnormal{R}\Gamma(\textnormal{Spec}(S^\flat[1/d]),\Z/p^n)$ then follows from the \etale cohomology comparison between $\textnormal{Spec}(S[1/p])$ and the adic space of the Huber pair $\textnormal{Spa}(S[1/p],S)$ (\cite{huber}, Corollary 3.2.2) (and similarly for $S^\flat[1/d]$), and the equivalence of \etale sites between a perfectoid space and its tilt (\cite{scholze}, Theorem 1.11); this equivalence of \etale sites is then the root of the \Einf-algebra comparison as in section \ref{section2}.  
\\ \\
First, a preliminary lemma of Mandell, whose proof we refer to \cite{mandellEinf}, in the paragraph preceding Proposition A.8.

\begin{lemma}\label{extensionScalars} There is a map of operads $\mathcal{E}_{\mathbb{F}_p} \otimes_{\mathbb{F}_p} \overline{\mathbb{F}_p} \rightarrow \mathcal{E}_{\overline{\mathbb{F}_p}}$. By changing the operad $\mathcal{E}_{\overline{\mathbb{F}_p}}$ if necessary, this map is an isomorphism. This defines a functor from \Einf-\Fp-algebras to \Einf-\Fpbar-algebras that sends $A$ to $A \otimes_{\mathbb{F}_p} \overline{\mathbb{F}_p}$.
\end{lemma}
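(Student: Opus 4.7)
The plan is to prove the lemma almost by direct construction: one takes $\mathcal{E}_{\mathbb{F}_p} \otimes_{\mathbb{F}_p} \overline{\mathbb{F}_p}$ as a candidate $E_\infty$-operad over $\overline{\mathbb{F}_p}$ and then notes it either maps into, or can be used to replace, any a priori chosen $\mathcal{E}_{\overline{\mathbb{F}_p}}$. The central point is that base change along the field extension $\mathbb{F}_p \hookrightarrow \overline{\mathbb{F}_p}$ is exact (indeed free), so it preserves both of the defining properties of an $E_\infty$-operad — arity-wise acyclicity with respect to the augmentation to the unit, and $\Sigma_n$-freeness (or more generally $\Sigma_n$-projectivity) of each arity.

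First I would argue that $\mathcal{E}' := \mathcal{E}_{\mathbb{F}_p} \otimes_{\mathbb{F}_p} \overline{\mathbb{F}_p}$ is genuinely an operad in $\mathrm{Ch}(\overline{\mathbb{F}_p}\text{-mod})$: the composition maps and unit of $\mathcal{E}_{\mathbb{F}_p}$ are $\mathbb{F}_p$-linear chain maps, so tensoring with $\overline{\mathbb{F}_p}$ over $\mathbb{F}_p$ yields $\overline{\mathbb{F}_p}$-linear chain maps on the base-changed complexes, and the operadic associativity, unitality, and equivariance diagrams extend verbatim because $(-) \otimes_{\mathbb{F}_p} \overline{\mathbb{F}_p}$ is strong symmetric monoidal from $\mathbb{F}_p$-chain complexes to $\overline{\mathbb{F}_p}$-chain complexes. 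Flatness of $\overline{\mathbb{F}_p}$ then gives $H_*(\mathcal{E}'(n)) = H_*(\mathcal{E}_{\mathbb{F}_p}(n)) \otimes_{\mathbb{F}_p} \overline{\mathbb{F}_p} = \overline{\mathbb{F}_p}$ in degree $0$, and each $\mathcal{E}'(n)$ remains $\Sigma_n$-free, so $\mathcal{E}'$ is an $E_\infty$-operad over $\overline{\mathbb{F}_p}$. Moreover, $\mathcal{Z}_{\mathbb{F}_p} \otimes_{\mathbb{F}_p} \overline{\mathbb{F}_p} \cong \mathcal{Z}_{\overline{\mathbb{F}_p}}$ in a natural way (from the explicit construction of the Eilenberg--Zilber operad as a functor in the coefficient ring), so the fixed operad map $\mathcal{E}_{\mathbb{F}_p} \to \mathcal{Z}_{\mathbb{F}_p}$ base-changes to a map $\mathcal{E}' \to \mathcal{Z}_{\overline{\mathbb{F}_p}}$, meaning $\mathcal{E}'$ is a legitimate choice of $E_\infty$-operad in the sense of the paper's conventions.

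For the stated map of operads, I would either (a) simply \emph{redefine} $\mathcal{E}_{\overline{\mathbb{F}_p}} := \mathcal{E}'$, in which case the map is the identity and the ``by changing $\mathcal{E}_{\overline{\mathbb{F}_p}}$ if necessary'' clause is discharged tautologically; or (b) given an a priori $\mathcal{E}_{\overline{\mathbb{F}_p}}$, use that $\mathcal{E}'$ is cofibrant as an operad (arity-wise $\Sigma_n$-projective) together with standard model-category lifting for chain operads (cf.\ Kriz--May) to produce a map $\mathcal{E}' \to \mathcal{E}_{\overline{\mathbb{F}_p}}$ which is a quasi-isomorphism and then invoke the paper's convention of allowing changes of the $E_\infty$-operad. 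Finally, for the functor from $E_\infty$-$\mathbb{F}_p$-algebras to $E_\infty$-$\overline{\mathbb{F}_p}$-algebras: given an algebra $A$ with structure maps $\mu_n \colon \mathcal{E}_{\mathbb{F}_p}(n) \otimes_{\mathbb{F}_p} A^{\otimes_{\mathbb{F}_p} n} \to A$, the base change $A \otimes_{\mathbb{F}_p} \overline{\mathbb{F}_p}$ acquires structure maps
\[
\mathcal{E}'(n) \otimes_{\overline{\mathbb{F}_p}} (A \otimes_{\mathbb{F}_p} \overline{\mathbb{F}_p})^{\otimes_{\overline{\mathbb{F}_p}} n} \;\cong\; \bigl(\mathcal{E}_{\mathbb{F}_p}(n) \otimes_{\mathbb{F}_p} A^{\otimes_{\mathbb{F}_p} n}\bigr) \otimes_{\mathbb{F}_p} \overline{\mathbb{F}_p} \xrightarrow{\mu_n \otimes \mathrm{id}} A \otimes_{\mathbb{F}_p} \overline{\mathbb{F}_p},
\]
and strong monoidality of extension of scalars shows these satisfy the required associativity, unitality, and equivariance axioms.

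The main obstacle I anticipate is not any computation but rather the bookkeeping needed to be sure the two candidate operads $\mathcal{E}'$ and $\mathcal{E}_{\overline{\mathbb{F}_p}}$ are compatibly augmented over $\mathcal{Z}_{\overline{\mathbb{F}_p}}$, so that the resulting notion of $E_\infty$-$\overline{\mathbb{F}_p}$-algebra produced by base change agrees (or agrees up to a harmless change of $E_\infty$-operad) with the one used elsewhere in the paper. If one picks option (a) above this is automatic; if one picks option (b) it uses cofibrancy of $\mathcal{E}'$ over $\mathcal{Z}_{\overline{\mathbb{F}_p}}$ to lift the comparison, which is where Mandell's cited construction does the real work and which is the reason the author refers the details to \cite{mandellEinf}.
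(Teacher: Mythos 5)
Your proposal is correct and is essentially the argument the paper relies on: the paper gives no proof of its own but defers to the paragraph preceding Proposition A.8 of Mandell's paper, where the point is exactly that base change along $\mathbb{F}_p \hookrightarrow \overline{\mathbb{F}_p}$ is strong symmetric monoidal and exact, hence sends an $E_\infty$-operad (arity-wise acyclic and $\Sigma_n$-free) to an $E_\infty$-operad, and one simply takes $\mathcal{E}_{\overline{\mathbb{F}_p}} := \mathcal{E}_{\mathbb{F}_p} \otimes_{\mathbb{F}_p} \overline{\mathbb{F}_p}$ (your option (a)). One minor caution: you do not need, and should not claim, that $\mathcal{Z}_{\mathbb{F}_p} \otimes_{\mathbb{F}_p} \overline{\mathbb{F}_p} \cong \mathcal{Z}_{\overline{\mathbb{F}_p}}$ (the Eilenberg--Zilber operad is built from possibly infinite products of natural transformations, which need not commute with $- \otimes_{\mathbb{F}_p} \overline{\mathbb{F}_p}$); the natural map $\mathcal{Z}_{\mathbb{F}_p} \otimes_{\mathbb{F}_p} \overline{\mathbb{F}_p} \rightarrow \mathcal{Z}_{\overline{\mathbb{F}_p}}$ suffices to equip $\mathcal{E}'$ with its augmentation to $\mathcal{Z}_{\overline{\mathbb{F}_p}}$ as required by the paper's conventions.
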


Now we study the extended scalars $C^{*}_{\textnormal{sing}}(X,\mathbb{F}_p) \otimes_{\mathbb{F}_p} \overline{\mathbb{F}_p}$. This is an \Einf-\Fpbar-algebra by the above lemma. However, there is another \Einf-\Fpbar-algebra structure as described in the lemma below; we will prove that these two structures agree. See Appendix B in \cite{mandellEinf} for details.
\begin{lemma}\label{secondExtensionScalars} $C^{*}_{\textnormal{sing}}(X,\mathbb{F}_p) \otimes_{\mathbb{F}_p} \overline{\mathbb{F}_p}$ is an algebra over $\mathcal{E}_{\overline{\mathbb{F}_p}}$. There is a natural map of $\mathcal{E}_{\overline{\mathbb{F}_p}}$-algebras $C^{*}_{\textnormal{sing}}(X,\mathbb{F}_p) \otimes_{\mathbb{F}_p} \overline{\mathbb{F}_p} \rightarrow C^{*}_{\textnormal{sing}}(X,\overline{\mathbb{F}_p})$.
\end{lemma}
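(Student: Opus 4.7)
The first assertion is essentially an application of Lemma \ref{extensionScalars} to $A = C^{*}_{\textnormal{sing}}(X,\mathbb{F}_p)$: extension of scalars turns any $\mathcal{E}_{\mathbb{F}_p}$-algebra into an $\mathcal{E}_{\mathbb{F}_p} \otimes_{\mathbb{F}_p} \overline{\mathbb{F}_p}$-algebra, and the operad isomorphism of that lemma identifies this with an $\mathcal{E}_{\overline{\mathbb{F}_p}}$-algebra structure on the tensor product.

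To construct the natural map, the plan is to first build it on the underlying cochain complexes. For each $n$, there is a canonical $\overline{\mathbb{F}_p}$-linear map
\[
\textnormal{Hom}_{\mathbb{Z}}(C_n(X;\mathbb{Z}),\mathbb{F}_p) \otimes_{\mathbb{F}_p} \overline{\mathbb{F}_p} \longrightarrow \textnormal{Hom}_{\mathbb{Z}}(C_n(X;\mathbb{Z}),\overline{\mathbb{F}_p})
\]
sending $f \otimes a$ to the cochain $\sigma \mapsto a \cdot \iota(f(\sigma))$, where $\iota : \mathbb{F}_p \hookrightarrow \overline{\mathbb{F}_p}$ is the canonical inclusion. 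These assemble into a chain map since the differential on the left is induced from that on the right up to the inclusion $\iota$, and since $\overline{\mathbb{F}_p}$ is faithfully flat over $\mathbb{F}_p$, tensoring commutes with cohomology and the map is in fact a quasi-isomorphism.

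The content of the lemma is that this chain map is equivariant for the $\mathcal{E}_{\overline{\mathbb{F}_p}}$-action. The strategy I would take is to factor both structures through the Eilenberg-Zilbur operad: both $C^{*}_{\textnormal{sing}}(X,\mathbb{F}_p) \otimes_{\mathbb{F}_p} \overline{\mathbb{F}_p}$ and $C^{*}_{\textnormal{sing}}(X,\overline{\mathbb{F}_p})$ carry canonical $\mathcal{Z}_{\overline{\mathbb{F}_p}}$-algebra structures from the iterated Alexander-Whitney construction, and these pull back along the fixed map $\mathcal{E}_{\overline{\mathbb{F}_p}} \to \mathcal{Z}_{\overline{\mathbb{F}_p}}$ to the $\mathcal{E}_{\overline{\mathbb{F}_p}}$-structures of interest. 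The essential naturality point is that $\mathcal{Z}_k$ is defined functorially in the coefficient ring $k$, so that there is a canonical map of operads $\mathcal{Z}_{\mathbb{F}_p} \otimes_{\mathbb{F}_p} \overline{\mathbb{F}_p} \to \mathcal{Z}_{\overline{\mathbb{F}_p}}$ (an isomorphism, possibly after adjusting the model as in Lemma \ref{extensionScalars}), and the Alexander-Whitney action is manifestly natural in the coefficients, since the defining cup-$i$ formulas are already integral.

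The main obstacle is the careful bookkeeping needed to identify the two a priori different $\mathcal{E}_{\overline{\mathbb{F}_p}}$-algebra structures on the tensor product --- the one obtained directly from Lemma \ref{extensionScalars}, and the one obtained by regarding it as a $\mathcal{Z}_{\overline{\mathbb{F}_p}}$-algebra as above --- and to check that these coincide (not merely that they are compatible in some weaker sense). This is precisely what is worked out in Appendix B of \cite{mandellEinf}, which we will cite. Once the identification is in hand, the conclusion becomes formal: a $\mathcal{Z}_{\overline{\mathbb{F}_p}}$-equivariant chain map is automatically $\mathcal{E}_{\overline{\mathbb{F}_p}}$-equivariant, so the map constructed above is the required morphism of $\mathcal{E}_{\overline{\mathbb{F}_p}}$-algebras.
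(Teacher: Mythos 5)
There is a gap here, rooted in a misreading of what the lemma is for. The sentence immediately preceding the statement says that this lemma introduces \emph{another} $\mathcal{E}_{\overline{\mathbb{F}_p}}$-algebra structure on $C^{*}_{\textnormal{sing}}(X,\mathbb{F}_p)\otimes_{\mathbb{F}_p}\overline{\mathbb{F}_p}$, a priori different from the extension-of-scalars structure of Lemma \ref{extensionScalars}, and the \emph{following} lemma is devoted entirely to proving that the two structures agree. By taking the Lemma \ref{extensionScalars} structure as your starting point, you collapse that distinction, and your proof of equivariance of the comparison map then requires exactly the identification of the two structures that you defer to Mandell's Appendix B --- which, in this paper's architecture, is the content of the next lemma. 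So as written your argument either assumes the hard part or renders the subsequent lemma circular. The construction the paper actually intends is different and makes the equivariance tautological: write
$C^{*}_{\textnormal{sing}}(X,\mathbb{F}_p)\otimes_{\mathbb{F}_p}\overline{\mathbb{F}_p}\cong\textnormal{colim}_{\alpha}\,C^{*}(X_{\alpha},\overline{\mathbb{F}_p})$,
where $\{X_{\alpha}\}$ is the inverse system of levelwise finite quotients of $X$ as a simplicial set. Each $C^{*}(X_{\alpha},\overline{\mathbb{F}_p})$ is an honest cochain algebra of a simplicial set with $\overline{\mathbb{F}_p}$-coefficients, hence an $\mathcal{E}_{\overline{\mathbb{F}_p}}$-algebra, the filtered colimit inherits this structure, and the maps $X\rightarrow X_{\alpha}$ induce the desired morphism as a colimit of maps of cochain algebras --- no equivariance check against the Alexander--Whitney formulas is needed at this stage.

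Two smaller points. First, your claim that the comparison map is a quasi-isomorphism is false for general $X$: the natural map $H^{n}(X;\mathbb{F}_p)\otimes_{\mathbb{F}_p}\overline{\mathbb{F}_p}\rightarrow H^{n}(X;\overline{\mathbb{F}_p})$ is an isomorphism only when $H_{n}(X;\mathbb{F}_p)$ is finite-dimensional (over a field, $H^{n}(X;k)$ is the full dual of $H_{n}(X;k)$, not the tensor product of the $\mathbb{F}_p$-dual with $k$). The lemma does not assert this and does not need it, so the claim should simply be dropped. Second, your description of the first assertion is fine as far as it goes --- Lemma \ref{extensionScalars} does give \emph{an} $\mathcal{E}_{\overline{\mathbb{F}_p}}$-structure --- but it is not the structure the statement of Lemma \ref{secondExtensionScalars} is meant to produce.
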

\begin{proof} Let $\{ X_{\alpha} \} $ denote the inverse system of levelwise finite quotients of $X$; that is, $X_{\alpha}$ is a quotient of $X$ as a simplicial set, and in each simplicial degree $n$, the set of $n$ simplices is finite. We give $C^{*}_{\textnormal{sing}}(X,\mathbb{F}_p) \otimes_{\mathbb{F}_p} \overline{\mathbb{F}_p}$ the structure of an \Einf-$\overline{\mathbb{F}_p}$-algebra via the natural isomorphism $C^{*}_{\textnormal{sing}}(X,\mathbb{F}_p) \otimes_{\mathbb{F}_p} \overline{\mathbb{F}_p} \cong \textnormal{colim}_{\alpha}$ $C^{*}(X_{\alpha},\overline{\mathbb{F}_p})$. The inverse system of maps $X \rightarrow X_{\alpha}$ induces a map of \Einf-$\overline{\mathbb{F}_p}$-algebras $C^{*}_{\textnormal{sing}}(X,\mathbb{F}_p)\otimes_{\mathbb{F}_p}\overline{\mathbb{F}_p} \rightarrow C^{*}_{\textnormal{sing}}(X,\overline{\mathbb{F}_p})$. 
\end{proof}

The following argument was communicated to the author by Mandell.

\begin{lemma} The \Einf-\Fpbar-algebra structure on $C^{*}_{\textnormal{sing}}(X,\mathbb{F}_p) \otimes_{\mathbb{F}_p} \overline{\mathbb{F}_p}$ from Lemma \ref{extensionScalars} and the \Einf-\Fpbar-algebra structure from Lemma \ref{secondExtensionScalars} agree. 
\end{lemma}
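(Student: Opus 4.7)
The plan is to reduce the comparison to the case of simplicial sets that are finite in each degree, where extension of scalars on cochains is transparent. Using the presentation from Lemma \ref{secondExtensionScalars}, the underlying chain complex $C^{*}_{\textnormal{sing}}(X,\mathbb{F}_p) \otimes_{\mathbb{F}_p} \overline{\mathbb{F}_p}$ is identified with the filtered colimit $\textnormal{colim}_{\alpha}\, C^{*}(X_{\alpha}, \overline{\mathbb{F}_p})$. Since each $X_{\alpha}$ is finite in every simplicial degree, the natural map
\begin{equation*}
C^{*}(X_{\alpha}, \mathbb{F}_p) \otimes_{\mathbb{F}_p} \overline{\mathbb{F}_p} \xrightarrow{\cong} C^{*}(X_{\alpha}, \overline{\mathbb{F}_p})
\end{equation*}
is a degreewise isomorphism, so the question of comparing operadic actions takes place on a single underlying complex at each stage.

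The key technical step is to show that this isomorphism intertwines the $\mathcal{E}_{\overline{\mathbb{F}_p}}$-action coming from Lemma \ref{extensionScalars} (namely, extension of scalars of the $\mathcal{E}_{\mathbb{F}_p}$-action) with the standard $\mathcal{E}_{\overline{\mathbb{F}_p}}$-action on $C^{*}(X_{\alpha}, \overline{\mathbb{F}_p})$. Both actions factor through the fixed operad map $\mathcal{E}_k \to \mathcal{Z}_k$ to the Eilenberg-Zilbur operad, and $\mathcal{Z}_k$ is built out of normalized cochains on products of standard simplices together with symmetric group actions, a construction that is manifestly natural in the coefficient ring $k$. This naturality yields a canonical isomorphism $\mathcal{Z}_{\mathbb{F}_p} \otimes_{\mathbb{F}_p} \overline{\mathbb{F}_p} \cong \mathcal{Z}_{\overline{\mathbb{F}_p}}$ under which the two $\mathcal{Z}$-actions on $C^{*}(X_{\alpha}, \mathbb{F}_p) \otimes_{\mathbb{F}_p} \overline{\mathbb{F}_p}$ literally coincide. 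Pulling back along $\mathcal{E}_{\overline{\mathbb{F}_p}} \cong \mathcal{E}_{\mathbb{F}_p} \otimes_{\mathbb{F}_p} \overline{\mathbb{F}_p} \to \mathcal{Z}_{\overline{\mathbb{F}_p}}$ then gives equality of the two $\mathcal{E}_{\overline{\mathbb{F}_p}}$-actions at the finite level.

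To finish, one takes the filtered colimit over $\alpha$. Both structures on $C^{*}_{\textnormal{sing}}(X,\mathbb{F}_p) \otimes_{\mathbb{F}_p} \overline{\mathbb{F}_p}$ are compatible with filtered colimits in the obvious sense: operadic composition involves only finitely many inputs at a time, and tensor products over a field commute with filtered colimits. The agreement of the two actions at each stage $X_{\alpha}$ therefore propagates to the colimit, yielding the claimed equality of $\mathcal{E}_{\overline{\mathbb{F}_p}}$-algebra structures.

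I expect the main obstacle to be the intermediate bookkeeping: verifying that the chosen isomorphism $\mathcal{E}_{\mathbb{F}_p} \otimes_{\mathbb{F}_p} \overline{\mathbb{F}_p} \cong \mathcal{E}_{\overline{\mathbb{F}_p}}$ of Lemma \ref{extensionScalars} is strictly compatible with the fixed operad maps to $\mathcal{Z}_{\mathbb{F}_p} \otimes_{\mathbb{F}_p} \overline{\mathbb{F}_p}$ and $\mathcal{Z}_{\overline{\mathbb{F}_p}}$ respectively. This is essentially a diagram chase at the level of Mandell's operadic constructions in \cite{mandellEinf}, but one must be careful that no hidden discrepancy is introduced when the $\mathcal{E}_{\mathbb{F}_p}$-structure on $C^{*}(X_{\alpha},\mathbb{F}_p)$ is reinterpreted as an $\mathcal{E}_{\overline{\mathbb{F}_p}}$-structure via the identification of operads.
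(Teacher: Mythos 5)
Your proposal follows essentially the same route as the paper: reduce to the levelwise finite quotients $X_{\alpha}$, where $C^{*}(X_{\alpha},\mathbb{F}_p)\otimes_{\mathbb{F}_p}\overline{\mathbb{F}_p} \to C^{*}(X_{\alpha},\overline{\mathbb{F}_p})$ is an isomorphism and the two operadic structures visibly agree, then pass to the filtered colimit using that colimits of \Einf-algebras are computed on underlying complexes. The paper packages the last step slightly differently --- it uses the universal property of the colimit to produce a single comparison map of \Einf-\Fpbar-algebras and checks it is an isomorphism on underlying complexes, rather than arguing that agreement ``propagates'' --- but the content is the same, and your extra discussion of naturality of the Eilenberg-Zilbur operad in the coefficient field fills in the step the paper dismisses as ``by finiteness.''
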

\begin{proof} The maps $X \rightarrow X_{\alpha}$ induce a map of \Einf-\Fpbar-algebras in the sense of Lemma \ref{extensionScalars}
\begin{equation*} 
C^{*}_{\textnormal{sing}}(X_{\alpha},\mathbb{F}_p)\otimes_{\mathbb{F}_p}\overline{\mathbb{F}_p} \rightarrow C^{*}_{\textnormal{sing}}(X,\mathbb{F}_p)\otimes_{\mathbb{F}_p}\overline{\mathbb{F}_p}.
\end{equation*}
There is a natural map $C^{*}_{\textnormal{sing}}(X_{\alpha},\mathbb{F}_p) \otimes_{\mathbb{F}_p} \overline{\mathbb{F}_p} \rightarrow C^{*}_{\textnormal{sing}}(X_{\alpha},\overline{\mathbb{F}_p})$ of \Einf-\Fpbar-algebras, which is an isomorphism since $X_{\alpha}$ is degreewise finite; that is, the lemma holds for $X_{\alpha}$ by finiteness. Thus we obtain a system of maps of \Einf-\Fpbar-algebras
\begin{equation*}
C^{*}_{\textnormal{sing}}(X_{\alpha},\overline{\mathbb{F}_p}) \rightarrow C^{*}_{\textnormal{sing}}(X,\mathbb{F}_p)\otimes_{\mathbb{F}_p}\overline{\mathbb{F}_p}.
\end{equation*} 
This system is filtered, so the colimit over $\alpha$ of $C^{*}(X_{\alpha},\overline{\mathbb{F}_p})$ in the category of \Einf-\Fpbar-algebras is the same colimit on the underlying differential graded modules over \Fpbar. By universal property of colimits in \Einf-\Fpbar-algebras, we obtain an \Einf-\Fpbar-algebra map 
\begin{equation*}
\textnormal{colim}_{\alpha } C^{*}_{\textnormal{sing}}(X_{\alpha},\overline{\mathbb{F}_p}) \rightarrow C^{*}_{\textnormal{sing}}(X,\mathbb{F}_p)\otimes_{\mathbb{F}_p}\overline{\mathbb{F}_p}
\end{equation*}
which is an isomorphism on the underlying differential graded modules over \Fpbar, and so is an isomorphism of \Einf-\Fpbar-algebras. The \Einf-\Fpbar-algebra structure on the colimit is in the sense of Lemma \ref{secondExtensionScalars}.
\end{proof}

The following theorem of Mandell (\cite{mandellEinf}, Theorem B.1) relates the above discussion to the $p$-profinite completion of the variety $X$ in the sense of Sullivan (\cite{sullivan}, Section 3). 

\begin{thm}\label{mandellTheorem3} For any connected simplicial set $X$, the composite map
\begin{equation*}
X \rightarrow \overline{\mathbb{U}}C^{*}_{\textnormal{sing}}(X,\overline{\mathbb{F}_p}) \rightarrow \overline{\mathbb{U}}(C^{*}_{\textnormal{sing}}(X,\mathbb{F}_p)\otimes_{\mathbb{F}_p}\overline{\mathbb{F}_p})
\end{equation*}
is Sullivan $p$-completion.
\end{thm}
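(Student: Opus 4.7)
The approach is to reduce to the levelwise-finite case via the inverse system $\{X_\alpha\}$ of finite simplicial quotients of $X$ already used in the previous lemma. By definition (Sullivan), the $p$-profinite completion of $X$ is the homotopy inverse limit $\operatorname{holim}_\alpha X_\alpha^{\wedge}_p$ over this system (or a cofinal subsystem of nilpotent quotients), so the goal is to identify the stated composite with this homotopy limit and its canonical structure map from $X$.

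The first key step would be to use the natural \Einf-\Fpbar-algebra isomorphism
\[
C^{*}_{\textnormal{sing}}(X,\mathbb{F}_p)\otimes_{\mathbb{F}_p}\overline{\mathbb{F}_p} \;\cong\; \operatorname{colim}_\alpha\, C^{*}(X_\alpha,\overline{\mathbb{F}_p})
\]
supplied by the previous lemma, together with the fact (from Theorem \ref{mandellTheorem1}) that $\overline{\mathbb{U}}$ is the right derived functor of a right adjoint. Invoking the derived adjunction, a filtered colimit of \Einf-\Fpbar-algebras is carried to a homotopy inverse limit of simplicial sets, producing
\[
\overline{\mathbb{U}}\bigl(C^{*}_{\textnormal{sing}}(X,\mathbb{F}_p)\otimes_{\mathbb{F}_p}\overline{\mathbb{F}_p}\bigr) \;\simeq\; \operatorname{holim}_\alpha \overline{\mathbb{U}}\,C^{*}(X_\alpha,\overline{\mathbb{F}_p}).
\]
For each index $\alpha$, the space $X_\alpha$ is degreewise finite and connected, hence of finite $p$-type; passing to the nilpotent tower I can apply Theorem \ref{mandellTheorem1} termwise to identify $X_\alpha \to \overline{\mathbb{U}}C^{*}(X_\alpha,\overline{\mathbb{F}_p})$ with Bousfield--Kan $p$-completion, which for such finite-type nilpotent simplicial sets agrees with Sullivan $p$-completion $X_\alpha^{\wedge}_p$.

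Putting these together and invoking the universal property of the homotopy inverse limit, the composite in the statement is naturally identified with the canonical map $X \to \operatorname{holim}_\alpha X_\alpha^{\wedge}_p$, which is Sullivan $p$-completion by definition. The main obstacle, and the step requiring the most care, is the colimit-to-limit exchange under $\overline{\mathbb{U}}$: this is not a purely formal consequence of Theorem \ref{mandellTheorem1} as stated here because that theorem is formulated as an adjunction on homotopy categories, so one has to upgrade the argument to the model-categorical (or $\infty$-categorical) framework that Mandell sets up in order to guarantee the derived right adjoint genuinely preserves these homotopy limits, and to ensure the tower of adjunction units assembles into a coherent map out of the single adjunction unit on $X$.
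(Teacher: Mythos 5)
First, a point of comparison: the paper does not actually prove this statement. It is quoted as Theorem B.1 of Mandell's \emph{$E_\infty$-algebras and $p$-adic homotopy theory}, and the only content the paper adds is the identification of the two arrows: the first is the unit of the adjunction of Theorem \ref{mandellTheorem1}, and the second is $\overline{\mathbb{U}}$ applied to the map $C^{*}_{\textnormal{sing}}(X,\mathbb{F}_p)\otimes_{\mathbb{F}_p}\overline{\mathbb{F}_p}\rightarrow C^{*}_{\textnormal{sing}}(X,\overline{\mathbb{F}_p})$ of Lemma \ref{secondExtensionScalars}. So you are attempting to reprove an external black box, and your sketch should be measured against Mandell's Appendix B rather than against anything in this paper.

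Your outline does have the right general shape (the argument does run through the system $\{X_\alpha\}$ of levelwise finite quotients and the colimit description of $C^{*}_{\textnormal{sing}}(X,\mathbb{F}_p)\otimes_{\mathbb{F}_p}\overline{\mathbb{F}_p}$), but two steps are asserted rather than proved, and one of them carries essentially the whole theorem. (i) Sullivan's $p$-profinite completion is \emph{not} defined as the homotopy limit of the Sullivan completions of the levelwise finite simplicial quotients $X_\alpha$; it is a homotopy limit over the category of maps from $X$ to spaces whose homotopy groups are finite $p$-groups. A levelwise finite simplicial set need not be nilpotent, let alone $p$-finite, so identifying your homotopy limit with Sullivan's requires a cofinality/comparison argument that you do not supply --- this is precisely where the content of Mandell's Theorem B.1 lives. (ii) Relatedly, applying Theorem \ref{mandellTheorem1} ``termwise'' to conclude that $X_\alpha\rightarrow\overline{\mathbb{U}}C^{*}(X_\alpha,\overline{\mathbb{F}_p})$ is Bousfield--Kan completion, and that this agrees with Sullivan completion, uses nilpotence (or at least more than connectedness and finite $p$-type) of $X_\alpha$, which you do not have; ``passing to the nilpotent tower'' names the gap without filling it. By contrast, the colimit-to-limit exchange that you single out as the main obstacle is the more routine part: filtered colimits of $E_\infty$-$\overline{\mathbb{F}_p}$-algebras are created on underlying complexes and are therefore homotopy colimits, and a derived contravariant mapping functor takes homotopy colimits to homotopy limits. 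So the proposal is not wrong in outline, but as written it reduces the theorem to an unproved claim about Sullivan completion that is close to equivalent to the theorem itself; citing Mandell's Theorem B.1, as the paper does, is the honest alternative.
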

The first map is from the unit of the adjunction in Theorem \ref{mandellTheorem1}. By Lemma \ref{secondExtensionScalars}, we have a map $C^{*}_{\textnormal{sing}}(X,\mathbb{F}_p) \otimes_{\mathbb{F}_p} \overline{\mathbb{F}_p} \rightarrow C^{*}_{\textnormal{sing}}(X,\overline{\mathbb{F}_p})$. This induces the second map above.
\\ \\
We are now ready to prove the main theorem \ref{MAINTHEOREM}.
\\ \\
\textit{Proof of Theorem} \ref{MAINTHEOREM}. The sheaf cohomology of any sheaf on the \etale site of $X$ inherits an \Einf-algebra structure by Godement considerations, as in section \ref{section2}. By Bhatt-Morrow-Scholze \ref{BMSetale} and Theorem \ref{BMSbigetale}, we have that
\begin{equation*}
(\textnormal{R}\Gamma(\frak{X},\Prism_{\frak{X}/A_{\textnormal{inf}}})\otimes^{L}_{A_{\textnormal{inf}}} \mathbb{C}^{\flat}_p)^{\phi=1} \simeq \textnormal{R}\Gamma_{\textnormal{\'et}}(X,\mathbb{F}_p)
\end{equation*}  
is a quasi-isomorphism of \Einf-\Fp-algebras. By the entirety of section \ref{section2}, the right hand side is quasi-isomorphic to $\textnormal{R}\Gamma(X(\C),\mathbb{F}_p)$ as \Einf-\Fp-algebras. By Theorem \ref{ConstantandSing}, we again obtain a quasi-isomorphism of \Einf-\Fp-algebras with $C^{*}_{\textnormal{sing}}(X(\C),\mathbb{F}_p)$. By Mandell \ref{mandellTheorem2}, applying the functor $\mathbb{U}$ to the left hand side then yields the free loop space of the Bousfield-Kan $p$-completion of $X(\C)$.
\\ \\
On the other hand, we can apply $-\otimes^L_{\mathbb{F}_p} \overline{\mathbb{F}_p}$ to the equation above. Again by Mandell's Theorem \ref{mandellTheorem1} and Theorem \ref{mandellTheorem3}, applying the functor $\overline{\mathbb{U}}$ to the resulting expression gives the Sullivan $p$-completion of $X(\C)$. $\qed$ 
\\ \\
\textbf{Remark.} We have seen that applying Mandell's functors $\mathbb{U}$ and $\overline{\mathbb{U}}$ to the Bhatt-Morrow-Scholze result recovers the free loop space of the Bousfield-Kan $p$-completion, and the Sullivan $p$-completion respectively. One may wonder if the Bousfield-Kan $p$-completion itself is accessible, without the presence of the free loop space. The author suspects this is doable given the following: Mandell proves Theorem \ref{mandellTheorem2} by taking homotopy fixed points of the Frobenius on $A\otimes_{\mathbb{F}_p}\overline{\mathbb{F}_p}$ where $A$ is a cofibrant replacement of $C^{*}_{\textnormal{sing}}(X,\mathbb{F}_p)$. Mandell shows this is weak equivalent (for $X$ connected, $p$-complete, nilpotent, and of finite $p$-type) to the homotopy fixed points of the space $X$ with the trivial action, which yields the free loop space. At the same time, the theorem of Bhatt-Morrow-Scholze recovers the \etale \Fp-cochains by taking homotopy fixed points of the Frobenius on $(\textnormal{R}\Gamma(\frak{X},\Prism_{\frak{X}/A_{\textnormal{inf}}})\otimes^{L}_{A_{\textnormal{inf}}} \mathbb{C}^{\flat}_p)$. However, this latter object is enormous over \Fpbar. The author suspects by passing to spectra and analyzing an appropriate Frobenius, as in \cite{nikolaus} or \cite{yuan}, and using the relationship between prismatic cohomology and topological Hothschild homology, one could recover a clean statement regarding the Bousfield-Kan $p$-completion of the variety $X$ over $\C$.

\textsc{Stony Brook University, Department of Mathematics}
\\
\textit{E-mail address:} {\fontfamily{cmtt}\selectfont tobias.shin@stonybrook.edu}

\end{document}